\theoremstyle{plain}
\newtheorem{thm}{Theorem}[section]
\newtheorem{prop}[thm]{Proposition}
\newtheorem{lemma}[thm]{Lemma}
\newtheorem{defi}[thm]{Definition}
\newtheorem{rem}[thm]{Remark}
\numberwithin{equation}{section}
\newcommand{\HOM}{\operatorname{Hom}}
\newcommand{\KER}{\operatorname{Ker}}
\newcommand{\Hom}[3]{\HOM_{#1}(#2,#3)}
\newcommand{\Rep}{\mbox{Rep}}
\newcommand{\HS}{{\mathtt{HS}}}
\newcommand{\Tr}{\mbox{\emph{Tr}}}
\newcommand{\N}{{\mathbb{N}}}
\newcommand{\Q}{\mathbb{Q}}
\newcommand{\Z}{\mathbb{Z}}
\newcommand{\C}{\mathbb{C}}
\newcommand{\St}{{\mathbb S}^3}
\newcommand{\Sth}{\widehat{{\mathbb S}^3}}
\newcommand{\DG}{\mathcal{D}'(G)}
\newcommand{\Ctl}{{\C^{(2\ell+1)\times (2\ell+1)}}}
\newcommand{\TS}{\mathbb{T}^1\times\St}
\newcommand{\p}[1]{{\left({#1}\right)}}
\newcommand{\jp}[1]{{\left\langle{#1}\right\rangle}}
\newlength{\dhatheight}
\newcommand{\doublehat}[1]{%
	\settoheight{\dhatheight}{\ensuremath{\hat{#1}}}
	\addtolength{\dhatheight}{-0.15ex}
	\widehat{\vphantom{\rule{5pt}{\dhatheight}}%
		\smash{\widehat{#1}}}}
\begin{document}
	
	\title[Partial Fourier Series on Compact Lie Groups]
	{Partial Fourier Series on Compact Lie Groups}
	

\author[Alexandre Kirilov]{Alexandre Kirilov}
\address{
	Universidade Federal do Paran\'{a}, 
	Departamento de Matem\'{a}tica,
	C.P.19096, CEP 81531-990, Curitiba, Brazil
}
\email{akirilov@ufpr.br}


\author[Wagner de Moraes]{Wagner A. A. de Moraes}
\address{
	Universidade Federal do Paran\'{a},
	Programa de P\'os-Gradua\c c\~ao de Matem\'{a}tica,
	C.P.19096, CEP 81531-990, Curitiba, Brazil
}
\email{wagneramat@gmail.com}


\author[Michael Ruzhansky]{Michael Ruzhansky}
\address{Ghent University, 
	Department of Mathematics: Analysis, Logic and Discrete Mathematics, 
	Ghent, Belgium 
	and 
	Queen Mary University of London, 
	School of Mathematical Sciences, 
	London, United Kingdom
}
\email{Michael.Ruzhansky@ugent.be}
	
	\date{\today}
	
	\subjclass{Primary 22E30, 43A77 ; Secondary 58D25, 43A25 }
	\keywords{compact Lie group, partial Fourier series, evolution equation}
	
	\begin{abstract}
	In this note we investigate the partial Fourier series on a product of two compact Lie groups. We give necessary and sufficient conditions for a sequence of partial Fourier coefficients to define a smooth function or a distribution. 
	As applications, we will study conditions for the global solvability of an evolution equation defined on $\TS$ and we will show that some properties of this evolution equation can be obtained from a constant coefficient equation.
	\end{abstract}

	\maketitle	
\tableofcontents
\raggedbottom
	\section{Introduction}

The study of global properties for vector fields and systems of vector fields defined on closed manifolds has made a great advance in the last decades, especially in the case where the manifold is a torus or a product of a torus by another closed manifold. See, for example, the impressive list of articles \cite{BCM93,BCP04,CC00,GPY92,GPY93,GW72,GW73a,GW73b,Hou82,Pet11,Pet98} and references therein, discussing problems related to the global solvability and global hypoellipticity in the smooth, analytical and ultradifferentiable senses.

One of the main tools in these references is the characterization of the functional environments via the asymptotic behavior of its Fourier coefficients, especially of the partial Fourier coefficients with respect to one or more variables.

For example, in the case of evolution equations, the use of partial Fourier series in the spatial variable can reduce the problem to find solutions to a sequence of ordinary differential equations depending on the time, for which the methods of control of asymptotic behavior are much more developed and understood. This technique is used in 
\cite{AGK18,AGKM18,AK19,BDGK15,BK07-JFA,Ber99,Hou79} and several others references.

A natural step in the development of the theory is to extend such techniques to the study of global properties of vector fields in compact Lie Groups. For instance, consider the operator $P: C^\infty (\mathbb{T}^1 \times \St) \to C^\infty(\mathbb{T}^1\times \St)$   defined by
\begin{equation}\label{1}
Pu(t,x) := \partial_tu(t,x) + a(t)Xu(t,x),
\end{equation}
where $X$ is a left-invariant vector field on $\St$ and $a \in C^\infty(\mathbb{T}^1)$ is a real-valued function. 

Taking the Fourier series with respect to the second variable, we obtain a system of ordinary differential equations on $t$, which can be easily solved. The analysis of the behavior of these solutions at infinity will allow us to say if the corresponding Fourier partial series converges to a smooth (or distributional) solution of the problem. 

In this work we define precisely the partial Fourier coefficients of functions and distributions, and we obtain the characterization of functions and distributions by its  partial Fourier series on a product of compact Lie groups.

The paper is organized as follows.	In Section 2 we present some classical results about Fourier analysis on compact Lie groups and fixed the notation that will be used henceforth. In Section 3 first we establish the relation of the Fourier analysis of a product of two compact Lie groups with the Fourier analysis of each compact Lie group. Then we define the partial Fourier coefficients and we give necessary and sufficient conditions for a sequence of partial Fourier coefficients to define a smooth function or a distribution. 
In section 4 we present two applications of the theory developed in the previous section. First we will study the global $C^\infty$-solvability of the operator \eqref{1} through its partial Fourier coefficients. Then, we present a conjugation that transform the operator \eqref{1} in a constant coefficient operator. 
\raggedbottom
	\section{Fourier analysis on compact Lie groups}
	
In this section we introduce most of the notations and preliminary results necessary for the development of this work. A very careful presentation of these concepts and the demonstration of all the results presented here can be found in the references \cite{FR16}  and \cite{livropseudo}.

Let $G$ be a compact Lie group and let $\Rep(G)$  be the set of continuous irreducible unitary representations of $G$. Since $G$ is compact, every continuous irreducible unitary representation $\phi$ is finite dimensional and it can be viewed as a matrix-valued function $\phi: G \to \C^{d_\phi\times d_\phi}$, where $d_\phi = \dim \phi$. We say that $\phi \sim \psi$ if there exists an unitary matrix $A\in C^{d_\phi \times d_\phi}$ such that $A\phi(x) =\psi(x)A$, for all $x\in G$. We will denote by $\widehat{G}$ the quotient of $\Rep(G)$ by this equivalence relation.
	
	For $f \in L^1(G)$ we define the group Fourier transform of $f$ at $\phi$ as
	\begin{equation}\label{deffourier}
	\widehat{f}(\phi)=\int_G f(x) \phi(x)^* \, dx,
	\end{equation}
	where $dx$ is the normalized Haar measure on $G$.
	By the Peter-Weyl theorem, we have that 
	\begin{equation}\label{ortho}
	\mathcal{B} := \left\{\sqrt{\dim \phi} \, \phi_{ij} \,; \ \phi=(\phi_{ij})_{i,j=1}^{d_\phi}, [\phi] \in \widehat{G} \right\},
\end{equation}
	is an orthonormal basis for $L^2(G)$, where we pick only one matrix unitary representation in each class of equivalence, and we may write
	\begin{equation*} \label{peterweyl}
	f(x)=\sum_{[\phi]\in \widehat{G}}d_\phi \emph{\Tr}(\phi(x)\widehat{f}(\phi)).
	\end{equation*}
	Moreover, the Plancherel formula holds:
	\begin{equation}
	\label{plancherel} \|f\|_{L^{2}(G)}=\left(\sum_{[\phi] \in \widehat{G}}  d_\phi \ 
	\|\widehat{f}(\phi)\|_{\HS}^{2}\right)^{1/2}=:
	\|\widehat{f}\|_{\ell^{2}(\widehat{G})},
	\end{equation}
	where 
	\begin{equation*}\label{normhs} \|\widehat{f}(\phi)\|_{\HS}^{2}=\emph{\Tr}(\widehat{f}(\phi)\widehat{f}(\phi)^{*})=\sum_{i,j=1}^{d_\phi}  \bigr|\widehat{f}(\phi)_{ij}\bigr|^2.
	\end{equation*}
	Let $\mathcal{L}_G$ be the Laplace-Beltrami operator of $G$. For each $[\phi] \in \widehat{G}$, its matrix elements are eigenfunctions of $\mathcal{L}_G$ correspondent to the same eigenvalue that we will denote by $-\nu_{[\phi]}$, where $\nu_{[\phi]} \geq 0$. Thus
	$$
	-\mathcal{L}_G \phi_{ij}(x) = \nu_{[\phi]}\phi_{ij}(x), \quad \textrm{for all } 1 \leq i,j \leq d_\phi,
	$$
	and we will denote by
	$$
	\jp \phi := \left(1+\nu_{[\phi]}\right)^{1/2}
	$$
	the eigenvalues of $(I-\mathcal{L}_G)^{1/2}.$ We have the following estimate for the dimension of $\phi$ (Proposition 10.3.19 of \cite{livropseudo}): there exists $C>0$ such that for all $[\xi] \in \widehat{G}$ it holds
\begin{equation}\label{dimension}
	d_\phi \leq C \jp{\phi}^{\frac{\dim G}{2}}.
\end{equation}
	
		For $x\in G$, $X\in \mathfrak{g}$ and $f\in C^\infty(G)$, define
	$$
	L_Xf(x):=\frac{d}{dt} f(x\exp(tX))\bigg|_{t=0}.
	$$

Notice that the operator $L_X$ is left-invariant. Indeed,
\begin{align*}
\pi_L(y)L_Xf(x) = L_Xf(y^{-1}x) \nonumber 
&= \frac{d}{dt} f(y^{-1}x\exp(tX))\bigg|_{t=0} \nonumber \\
&= \frac{d}{dt} \pi_L(y)f(x\exp(tX))\bigg|_{t=0}\nonumber \\
&= L_X\pi_L(y)f(x),\nonumber
\end{align*}
for all $x,y\in G$.

When there is no possibility of ambiguous meaning, we will write only $Xf$ instead of $L_Xf$. 

	Let $G$ be a compact Lie group of dimension $n$ and let $\{X_i\}_{i=1}^n$ be a basis of its Lie algebra. For a multi-index $\alpha =(\alpha_1,\cdots \alpha_n)\in \N_0^n$, we define the left-invariant differential operator of order $|\alpha|$
	\begin{equation*}\label{partial}
	\partial^\alpha := Y_1 \cdots Y_{|\alpha|},
	\end{equation*}
	with $Y_j \in \{X_i \}_{i=1}^n$, $1 \leq j \leq |\alpha|$ and $\sum\limits_{j:Y_j=X_k}1=\alpha_k$ for every $1\leq k \leq n$. It means that $\partial^\alpha$ is a composition of left-invariant derivatives with respect to vectors $X_1, \dots, X_n$ such that each $X_k$ enters $\partial^\alpha$ exactly $\alpha_k$ times. We do not specify in the notation $\partial^\alpha$ the order of vectors $X_1,\dots,X_n$, but this will not be relevant in the arguments that we will use in this work. 
	
	We equipped $C^\infty(G)$ with the usual Fréchet space topology defined by seminorms $p_\alpha(f) = \max\limits_{x \in G} |\partial^\alpha f(x)|$. Thus, the convergence on $C^\infty(G)$ is just the uniform convergence of functions and all their derivatives: $f_k \to f$ in $C^\infty(G)$ if $\partial^\alpha f_k(x) \to \partial^\alpha f(x)$, for all $x \in G$, due to the compactness of $G$.	We define the space of distributions $\mathcal{D}'(G)$ as the space of all continuous linear functional on $C^\infty(G)$.	
	For $u \in \DG$, we define the distribution $\partial^\alpha u$ as
	$$
	\jp{\partial^\alpha u, \psi} := (-1)^{|\alpha|} \jp{u,\partial^\alpha \psi},
	$$
	for all $\psi \in C^\infty(G)$.
	
	Let $\alpha \in \N_0^n$. The symbol of $\partial^\alpha$ at $x \in G$ and  $\xi \in \Rep (G)$ is
	\begin{equation}\label{defsymbol}
		\sigma_{\partial^\alpha}(x, \phi) := \phi(x)^*(\partial^\alpha \phi)(x) \in \C^{d_\phi\times d_\phi}.
	\end{equation}
	Notice that the symbol of $\partial^\alpha$ does not depends of $x\in G$ because $\partial^\alpha$ is a left-invariant operator. 
	There exists $C_0>0$ such that 
	\begin{equation}\label{estimateop}
	\|\sigma_{\partial^\alpha}(\phi)\|_{op} \leq C_0^{|\alpha|} \jp{\phi}^{|\alpha|},
	\end{equation}
	for all $\alpha \in \N_0^n$ and $[\phi] \in \widehat{G}$ (see \cite{DR14}, Proposition 3.4), where $\|\cdot\|_{op}$ stands for the usual operator norm and, from Chapter 2 of \cite{GVL96}, we have
\begin{equation}\label{symbol}
	\|\sigma_{\partial^\alpha}(\phi)\|_{op} \leq 	\|\sigma_{\partial^\alpha}(\phi)\|_{\HS} \leq \sqrt{d_\phi}	\|\sigma_{\partial^\alpha}(\phi)\|_{op}.	
\end{equation}

For every integer $M \geq \frac{\dim G}{2}$ there exists $C_M>0$ such that 
	\begin{equation}\label{estimatexi}
	\|\phi_{ij}\|_{L^\infty(G)} \leq C_M \jp{\phi}^M,
	\end{equation}
	for all $[\phi] \in \widehat{G}$, $1\leq i,j \leq d_\phi$ (see \cite{DR14}, Lemma 3.3).
	
	Smooth functions on $G$ can be characterized in terms of
	their Fourier coefficients comparing with powers of the eigenvalues of $(I-\mathcal{L}_G)^{1/2}.$ Precisely, the following statements are equivalent:
	\begin{enumerate}[(i)] 
		\item $f \in C^\infty(G)$;
		\item for each $N>0$, there exists $C_N > 0$ such that
		$$
		\|\widehat{f}(\phi)\|_{\HS} \leq C_N \jp{\phi}^{-N},
		$$
		for all $[\phi] \in \widehat{G}$;
		\item for each $N>0$, there exists $C_N > 0$ such that
		$$
		|\widehat{f}(\phi)_{ij}| \leq C_N \jp{\phi}^{-N}, 
		$$
		for all $[\phi] \in \widehat{G}, \ 1\leq i,j \leq d_\phi.$
	\end{enumerate}

By duality, we have a similar result for distributions on $G$, 	where we define $$
\widehat{u}(\phi)_{ij}:=u(\overline{\phi_{ji}}).
$$
This definition agrees with \eqref{deffourier} when the distribution comes from a $L^1(G)$ function. The following statements are equivalent:
	\begin{enumerate}[(i)]
		\item $u \in \DG$;
		\item there exist $C, \, N > 0$ such that
		$$
		\|\widehat{u}(\phi)\|_{\HS} \leq C \jp{\phi}^{N}, 
		$$
		for all $[\phi] \in \widehat{G}$;
		\item there exist $C, \, N > 0$ such that
		$$
		|\widehat{u}(\phi)_{ij}| \leq C \jp{\phi}^{N},
		$$
		for all $[\phi] \in \widehat{G}, \  1\leq i,j \leq d_\phi.$
	\end{enumerate}

\section{Partial Fourier series}\label{partialfourier}

Let $G_1$ and $G_2$ be compact Lie groups, and set $G=G_1\times G_2$.
Let  $\xi \in \Hom{}{G_1}{\mbox{{Aut}}(V_1)}$ and $\eta \in \Hom{}{G_2}{\mbox{{Aut}}(V_2)}$. The  external tensor product representation $\xi \otimes \eta$ of $G$ on $V_1\otimes V_2$ is defined by
{\small
	$$
	\begin{array}{ccclccl}
	\xi \otimes \eta: & G_1 \times G_2 &\to& \mbox{{Aut}}(V_1 \otimes V_2)&&& \\
	& (x_1,x_2) & \mapsto &	(\xi \otimes \eta)(x_1,x_2): & V_1 \otimes V_2&\to&V_1 \otimes V_2\\
	&&&&(v_1,v_2)&\mapsto& \xi(x_1)(v_1)\otimes \eta(x_2)(v_2)
	\end{array}
	$$
}

We point out that the external tensor product of unitary representation is also unitary. Moreover, if $\xi \in \Hom{}{G}{\mbox{U}(d_\xi)}$ and $\eta \in \Hom{}{G}{\mbox{U}(d_\eta)}$ are matrix unitary representations, then $\xi\otimes\eta \in \Hom{}{G}{\mbox{U}(d_\xi d_\eta)}$ is also a matrix unitary representation and 
$$
\xi\otimes\eta(x_1,x_2) = \xi(x_1) \otimes \eta(x_2) \in \C^{d_\xi d_\eta \times d_\xi d_\eta},
$$
where  $\xi(x_1) \otimes \eta(x_2)$ is the Kronecker product of these matrices, that is,
$$
\xi(x_1) \otimes \eta(x_2) = \left(\begin{array}{ccc}
\xi(x_1)_{11}\eta(x_2) & \cdots & \xi(x_1)_{1d_\xi}\eta(x_2) \\
\vdots & \ddots & \vdots \\
\xi(x_1)_{d_\xi1}\eta(x_2) & \cdots & \xi(x_1)_{d_\xi d_\xi} \eta(x_2)
\end{array}
\right).
$$

It is enough to study continuous irreducible unitary representations of $G_1$ and $G_2$ to obtain the elements of $\widehat{G}$, since for every $[\phi] \in \widehat{G}$, there exist $[\xi] \in \widehat{G_1}$ and $[\eta] \in \widehat{G_2}$ such that
	$\phi \sim \xi\otimes\eta$,
	that is, $[\phi] = [\xi\otimes\eta] \in \widehat{G}$  and $d_\phi = d_\xi \cdot d_\eta$. Moreover,  $[\xi_1 \otimes \eta_1]= [\xi_2 \otimes \eta_2]$ if and only if $[\xi_1] = [\xi_2]$ and $[\eta_1] = [\eta_2]$.
	The proof of this fact can be found on \cite{BD95} (Chapter II, Proposition 4.14). Therefore, the map $[\xi\otimes\eta] \mapsto ([\xi],[\eta])$ is a bijection from $\widehat{G}$ to $\widehat{G_1} \times \widehat{G_2}$.

It is easy to see that $\mathcal{L}_G = \mathcal{L}_{G_1} + \mathcal{L}_{G_2}$, so $\nu_{[\xi \otimes \eta]} = \nu_{[\xi]} + \nu_{[\eta]}$. Therefore we have
\begin{equation}\label{pseudonorm}
	\frac{1}{2} (\jp{\xi} + \jp{\eta}) \leq \jp{\xi \otimes \eta} \leq \jp{\xi} + \jp{\eta},
\end{equation}
	for all $[\xi] \in \widehat{G_1}$ and $[\eta] \in \widehat{G_2}$.

Let $f \in L^1(G)$ and $[\phi] \in \widehat{G}$. Let $[\xi] \in \widehat{G_1}$ and $[\eta] \in \widehat{G_2}$ such that $[\phi] = [\xi\otimes\eta]$. Notice that
\begin{align*}
\widehat{f}(\xi\otimes\eta) &= \int_G f(x) (\xi\otimes\eta)(x)^* \, dx \nonumber \\
&= \int_{G_2}\int_{G_1}f(x_1,x_2)(\xi(x_1)\otimes \eta(x_2))^* \, dx_1dx_2 \nonumber \\
&= \int_{G_2}\int_{G_1}f(x_1,x_2)\xi(x_1)^*\otimes \eta(x_2)^* \, dx_1dx_2 \nonumber.
\end{align*}

Thus $\widehat{f}(\xi\otimes\eta) \in \C^{d_\xi d_\eta \times d_\xi d_\eta}$ with elements
\begin{align*}
\widehat{f}(\xi\otimes\eta)_{ij} &=  \int_{G_2}\int_{G_1}f(x_1,x_2)(\xi(x_1)^*\otimes \eta(x_2)^*)_{ij} \, dx_1dx_2 \nonumber \\
&=\int_{G_2}\int_{G_1} f(x_1,x_2)\overline{\xi(x_1)_{nm}} \, \overline{\eta(x_2)_{sr}} \, dx_1 dx_2 \nonumber
\end{align*}
where $1 \leq m,n \leq d_\xi$, $1 \leq r,s \leq d_\eta$ are given by
\begin{equation*}
\begin{array}{ccc}
m &=& \left\lfloor \frac{i-1}{d_\eta} \right\rfloor  +1 \nonumber,  \\
n &=&  \left\lfloor \frac{j-1}{d_\eta} \right\rfloor  +1 \nonumber,
\end{array}
\qquad
\begin{array}{ccc}
r &=& i-\left\lfloor \frac{i-1}{d_\eta} \right\rfloor d_\eta \nonumber, \\
s &=&  j-\left\lfloor \frac{j-1}{d_\eta} \right\rfloor d_\eta \nonumber. 
\end{array}
\end{equation*}

Similarly for $u \in \DG$, we have
$$
\widehat{u}(\xi\otimes\eta)_{ij} = \jp{u, \overline{(\xi \otimes \eta)_{ji}}} = \jp{u, \overline{\xi_{nm} \times \eta_{sr}} },
$$
where $(\xi_{nm} \times \eta_{sr})(x_1,x_2):= \xi(x_1)_{nm}\eta(x_2)_{sr}$.
\begin{defi}
		Let $G_1$ and $G_2$ be compact Lie groups and, set $G=G_1\times G_2$. Let $f \in L^1(G)$, $\xi \in \emph{\Rep}(G_1)$, and $x_2 \in G_2$. The  $\xi$-partial Fourier coefficient of $f$ at $x_2$ is defined by 
	$$
	\widehat{f}(\xi, x_2) = \int_{G_1} f(x_1,x_2)\, \xi(x_1)^* \, dx_1 \in \C^{d_\xi \times d_\xi},
	$$
	with components
	$$
	\widehat{f}(\xi, x_2)_{mn} = \int_{G_1} f(x_1,x_2)\, \overline{\xi(x_1)_{nm}} \, dx_1, \quad 1 \leq m,n\leq d_\xi.
	$$
	Similarly, for $\eta \in \emph{\Rep}(G_2)$ and $x_1 \in G_1$, we define the  $\eta$-partial Fourier coefficient of $f$ at $x_1$ as
	$$
	\widehat{f}(x_1,\eta) = \int_{G_2} f(x_1,x_2)\, \eta(x_2)^* \, dx_2 \in \C^{d_\eta \times d_\eta},
	$$
	with components
	$$
	\widehat{f}(x_1, \eta)_{rs} = \int_{G_2} f(x_1,x_2)\, \overline{\eta(x_2)_{sr}} \, dx_1, \quad 1 \leq r,s\leq d_\eta.
	$$
\end{defi}

By the definition, the function
	$$
	\begin{array}{rccl}
	\widehat{f}(\xi, \: \cdot\: )_{mn}: & G_2 & \longrightarrow & \C \\
	& x_2 & \longmapsto & \widehat{f}(\xi, x_2)_{mn}
	\end{array}
	$$
	belongs to $L^1(G_2)$ for all $\xi \in \Rep(G_1)$, $1 \leq m,n\leq d_\xi$. Similarly, the function
$$
	\begin{array}{rccl}
		\widehat{f}(\: \cdot \: , \eta )_{rs}: & G_1 & \longrightarrow & \C \\
		& x_1 & \longmapsto & \widehat{f}(x_1,\eta)_{rs}
	\end{array}
$$
	belongs to $L^1(G_1)$ for all $\eta \in \Rep(G_2)$, $1 \leq r,s\leq d_\eta$. 

Let $\xi \in \mbox{Rep}(G_1)$ and $\eta \in \mbox{Rep}(G_2)$. Since $\widehat{{f}}(\xi,\, \cdot\, )_{mn} \in L^1(G_2)$ for all $1\leq m,n \leq d_\xi$, we can take its $\eta$-coefficient of Fourier:
$$
\doublehat{\,f\,}\!(\xi,\eta)_{mn}:= \int_{G_2} \widehat{f}(\xi, x_2)_{mn} \eta(x_2)^* \, dx_2 \in \C^{d_\eta \times d_\eta}
$$
with components
\begin{align*}
\doublehat{\,f\,}\!(\xi,\eta)_{mn_{rs}} &= \int_{G_2} \widehat{f}(\xi, x_2)_{mn} \overline{\eta(x_2)_{sr}} \, dx_2 \nonumber \\
&= \int_{G_2}\int_{G_1} f(x_1,x_2)\overline{\xi(x_1)_{nm}} \, \overline{\eta(x_2)_{sr}} \, dx_1 dx_2, \nonumber
\end{align*}
for $1 \leq r,s \leq d_\eta$. 
Similarly, since $\widehat{f}(\:\cdot\:,\eta)_{rs} \in L^1(G_1)$ for all $1\leq r,s \leq d_\eta$, we can take its $\xi$-coefficient of Fourier:
$$
\doublehat{\,f\,}\!(\xi,\eta)_{rs}:= \int_{G_1} \widehat{f}(x_1,\eta)_{rs} \xi(x_1)^* \, dx_1 \in \C^{d_\xi \times d_\xi}
$$
with components
\begin{align*}
\doublehat{\,f\,}\!(\xi,\eta)_{rs_{mn}} &= \int_{G_1} \widehat{f}(x_1, \eta)_{rs} \overline{\xi(x_1)_{nm}} \, dx_1 \nonumber \\
&= \int_{G_1}\int_{G_2} f(x_1,x_2)\overline{\xi(x_1)_{nm}} \, \overline{\eta(x_2)_{sr}} \, dx_2 dx_1, \nonumber
\end{align*}
for $1 \leq m,n \leq d_\xi$. 

Notice that
\begin{equation*}
\doublehat{\,f\,}\!(\xi,\eta)_{mn_{rs}} = \doublehat{\,f\,}\!(\xi,\eta)_{rs_{mn}} =  \widehat{f}(\xi \otimes \eta)_{ij},
\end{equation*}
with
\begin{align*}
i = d_\eta(m-1)+ r \nonumber,  \quad 
j =  d_\eta(n-1) + s \nonumber,
\end{align*}
for all $1\leq m,n \leq d_\xi$ and $1 \leq r,s \leq d_\eta$.
\begin{defi}
	Let $G_1$ and $G_2$ be compact Lie groups, and set $G=G_1\times G_2$. Let $u \in \DG$, $\xi \in \emph{\Rep}(G_1)$ and $1\leq m,n \leq d_\xi$. The $mn$-component of  the $\xi$-partial Fourier coefficient of $u$ is the linear functional defined by
	$$
	\begin{array}{rccl}
	\widehat{u}(\xi, \: \cdot\: )_{mn}: & C^\infty(G_2) & \longrightarrow & \C \\
	& \psi & \longmapsto & \jp{\widehat{u}(\xi, \: \cdot \:)_{mn},\psi} := \jp{u,\overline{\xi_{nm}}\times\psi}_G.
	\end{array}
	$$
	In a similar way, for $\eta \in \emph{\Rep}(G_2)$ and $1 \leq r,s\leq d_\eta$, we define the $rs$-component of the $\eta$-partial Fourier coefficient of $u$ as
	$$
	\begin{array}{rccl}
	\widehat{u}( \: \cdot\:,\eta )_{rs}: & C^\infty(G_1) & \longrightarrow & \C \\
	& \varphi & \longmapsto & \jp{\widehat{u}( \: \cdot \:,\eta)_{rs},\varphi} := \jp{u,\varphi\times\overline{\eta_{sr}}}_G.
	\end{array}
	$$
\end{defi}

		By definition, $ \widehat{u}(\xi, \: \cdot\: )_{mn} \in \mathcal{D}'(G_2)$ and $\widehat{u}( \: \cdot\:,\eta )_{rs} \in \mathcal{D}'(G_1)$ for all $\xi \in \Rep(G_1)$, $\eta \in \Rep(G_2)$, $1 \leq m,n\leq d_\xi$ and $1 \leq r,s\leq d_\eta$.

Let $\xi \in \mbox{Rep}(G_1)$ and $\eta \in \mbox{Rep}(G_2)$. Since $\widehat{{u}}(\xi,\, \cdot\, )_{mn} \in \mathcal{D}'(G_2)$ for all $1\leq m,n \leq d_\xi$, we can take its $\eta$-coefficient of Fourier:
$$
\doublehat{\,u\,}\!(\xi,\eta)_{mn}:= \jp{\widehat{{u}}(\xi,\, \cdot\, )_{mn},\eta^*} \in \C^{d_\eta \times d_\eta}
$$
with components
$$
\doublehat{\,u\,}\!(\xi,\eta)_{mn_{rs}} = \jp{\widehat{{u}}(\xi,\, \cdot\, )_{mn},\overline{\eta_{sr}}} = \jp{u,\overline{\xi_{nm}}\times\overline{\eta_{sr}}}_G = \jp{u,\overline{\xi_{nm}\times\eta_{sr}}}_G,
$$
for all $1\leq r,s \leq d_\eta$. Now, since $\widehat{u}( \: \cdot\:,\eta )_{rs} \in \mathcal{D}'(G_1)$ for all $1 \leq r,s\leq d_\eta$ we can take its $\xi$-coefficient of Fourier:
$$
\doublehat{\,u\,}\!(\xi,\eta)_{rs}:= \jp{\widehat{u}( \: \cdot\:,\eta )_{rs},\xi^*} \in \C^{d_\xi \times d_\xi}
$$
with components
$$
\doublehat{\,u\,}\!(\xi,\eta)_{rs_{mn}} = \jp{\widehat{u}( \: \cdot\:,\eta )_{rs},\overline{\xi_{mn}}} = \jp{u,\overline{\xi_{nm}}\times\overline{\eta_{sr}}}_G = \jp{u,\overline{\xi_{nm}\times\eta_{sr}}}_G,
$$
for all $1\leq m,n \leq d_\xi$. Notice that
\begin{equation*}
\doublehat{\,u\,}\!(\xi,\eta)_{mn_{rs}} = \doublehat{\,u\,}\!(\xi,\eta)_{rs_{mn}} =  \widehat{u}(\xi \otimes \eta)_{ij},
\end{equation*}
with
\begin{align*}
i = d_\eta(m-1)+ r \nonumber,  \quad
j  =  d_\eta(n-1) + s \nonumber,
\end{align*}
for all $1\leq m,n \leq d_\xi$ and $1 \leq r,s \leq d_\eta$.

Notice that 
\begin{equation}\label{normHS}
\left\|\widehat{u}(\xi\otimes\eta)\right\|_{\HS}^2 = \sum_{i,j=1}^{d_\xi d_\eta} \left| \widehat{u}(\xi\otimes\eta)_{ij}\right|^2 = \sum_{m,n=1}^{d_\xi} \sum_{r,s=1}^{d_\eta} \left| \doublehat{\,u\,}\!(\xi,\eta)_{mn_{rs}}\right|^2 =: \Big\|\doublehat{\,u\,}\!(\xi,\eta)\Big\|_{\HS}^2,
\end{equation}
for all $[\xi] \in \widehat{G_1}$ and $[\eta] \in \widehat{G_2}$ whenever $u \in L^1(G)$ or $u\in \DG$.

It follows from the \eqref{pseudonorm} and \eqref{normHS} that we have the following characterization of smooth functions and distributions using the method of taking the partial Fourier coefficients twice that was discussed above.
 
\begin{thm}\label{smoo}
	Let $G_1$ and $G_2$ be compact Lie groups, and set $G=G_1\times G_2$ . The following statements are equivalent:
	\begin{enumerate}[(i)]
		\item $f \in C^\infty(G)$;
		\item For every $N>0$, there exists $C_N>0$ such that
	$$
	\|\doublehat{\,f\,}\!(\xi,\eta)\|_{\HS} \leq C_N(\jp{\xi}+\jp{\eta})^{-N},
	$$
	for all $[\xi] \in \widehat{G_1}$ and $[\eta] \in \widehat{G_2}$;
	\item For every $N>0$, there exists $C_N>0$ such that
	$$
	\Big|\doublehat{\,f\,}\!(\xi,\eta)_{mn_{rs}}  \Big| \leq C_N (\jp{\xi}+\jp{\eta})^{-N},
	$$
	for all $[\xi] \in \widehat{G_1}$, $[\eta] \in \widehat{G_2}$, $1 \leq m,n \leq d_\xi$,  and $1 \leq r,s \leq d_\eta$.
\end{enumerate}
\end{thm}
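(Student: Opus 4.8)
The plan is to reduce the whole statement to the characterization of $C^\infty(G)$ in terms of the full Fourier coefficients $\widehat f(\phi)$ recalled in Section 2, exploiting two facts established above: the bijection $[\xi\otimes\eta]\leftrightarrow([\xi],[\eta])$ between $\widehat G$ and $\widehat{G_1}\times\widehat{G_2}$ together with the identities $\doublehat{f}(\xi,\eta)_{mn_{rs}}=\widehat f(\xi\otimes\eta)_{ij}$ and $\|\doublehat{f}(\xi,\eta)\|_{\HS}=\|\widehat f(\xi\otimes\eta)\|_{\HS}$ coming from \eqref{normHS}, and the equivalence of gauges $\tfrac{1}{2}(\jp\xi+\jp\eta)\le\jp{\xi\otimes\eta}\le\jp\xi+\jp\eta$ from \eqref{pseudonorm}. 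I will prove the cycle (i)$\Rightarrow$(ii)$\Rightarrow$(iii)$\Rightarrow$(i).

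For (i)$\Rightarrow$(ii): if $f\in C^\infty(G)$, then for every $N>0$ there is $C_N>0$ with $\|\widehat f(\phi)\|_{\HS}\le C_N\jp\phi^{-N}$ for all $[\phi]\in\widehat G$; writing $[\phi]=[\xi\otimes\eta]$ and using the lower bound in \eqref{pseudonorm} to get $\jp{\xi\otimes\eta}^{-N}\le 2^N(\jp\xi+\jp\eta)^{-N}$, the identity \eqref{normHS} yields $\|\doublehat{f}(\xi,\eta)\|_{\HS}\le 2^NC_N(\jp\xi+\jp\eta)^{-N}$, which is (ii) (with a new constant). The step (ii)$\Rightarrow$(iii) is immediate from $|\doublehat{f}(\xi,\eta)_{mn_{rs}}|\le\|\doublehat{f}(\xi,\eta)\|_{\HS}$.

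For (iii)$\Rightarrow$(i): summing the componentwise bounds over the $d_\xi^2d_\eta^2$ indices gives $\|\doublehat{f}(\xi,\eta)\|_{\HS}\le d_\xi d_\eta\,C_N(\jp\xi+\jp\eta)^{-N}$; estimating $d_\xi d_\eta\le C^2\jp\xi^{\dim G_1/2}\jp\eta^{\dim G_2/2}\le C^2(\jp\xi+\jp\eta)^{\dim G/2}$ via \eqref{dimension} (here $\dim G=\dim G_1+\dim G_2$), and then using the upper bound in \eqref{pseudonorm} in the form $(\jp\xi+\jp\eta)^{-M}\le\jp{\xi\otimes\eta}^{-M}$ with $M=N-\tfrac{\dim G}{2}>0$, I obtain $\|\widehat f(\phi)\|_{\HS}\le C^2C_N\jp\phi^{-(N-\dim G/2)}$ for all $[\phi]\in\widehat G$. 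Since $N$ was arbitrary this shows $\widehat f(\phi)$ decays faster than every power of $\jp\phi^{-1}$, hence $f\in C^\infty(G)$ by the criterion of Section 2.

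The only point requiring care is the bookkeeping in (iii)$\Rightarrow$(i): one must absorb the polynomial factor $d_\xi d_\eta$ without losing the arbitrariness of $N$, which is exactly what \eqref{dimension} allows, at the cost of shifting the exponent by $\tfrac{\dim G}{2}$; apart from that, each implication is a direct substitution through the above dictionary. I also note that the statement presupposes $f$ to be an a priori object for which the partial coefficients are defined — e.g.\ $f\in L^1(G)$ or $f\in\DG$, as in the discussion preceding the theorem — so that $\doublehat{f}(\xi,\eta)_{mn_{rs}}=\widehat f(\xi\otimes\eta)_{ij}$ and \eqref{normHS} are available and no separate convergence argument is needed.
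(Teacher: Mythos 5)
Your proposal is correct and follows exactly the route the paper intends: the paper gives no detailed proof, stating only that Theorem \ref{smoo} ``follows from \eqref{pseudonorm} and \eqref{normHS}'' together with the Section 2 characterization of $C^\infty(G)$ by Fourier coefficients, and your argument is precisely that reduction written out, with the dimension bound \eqref{dimension} correctly invoked to absorb the factor $d_\xi d_\eta$ in (iii)$\Rightarrow$(i). Your closing remark about $f$ needing to be an a priori object ($L^1$ or distributional) is a reasonable and accurate reading of the paper's implicit hypotheses.
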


\begin{thm}\label{dist}
	Let $G_1$ and $G_2$ be compact Lie groups, and  set $G=G_1\times G_2$. The following statements are equivalent:
	\begin{enumerate}[(i)]
		\item  $u \in \DG$;
		\item There exist $C, \, N >0$ such that
	$$
	\|\doublehat{\,u\,}\!(\xi,\eta)\|_{\HS} \leq C(\jp{\xi}+\jp{\eta})^{N},
	$$
	 for all $[\xi] \in \widehat{G_1}$ and $[\eta] \in \widehat{G_2}$;
	\item There exist $C, \, N >0$ such that
	$$
	\Big|\doublehat{\,u\,}\!(\xi,\eta)_{mn_{rs}}  \Big| \leq C (\jp{\xi}+\jp{\eta})^{N},
	$$
	for all $[\xi] \in \widehat{G_1}$, $[\eta] \in \widehat{G_2}$, $1 \leq m,n \leq d_\xi$, and $1 \leq r,s \leq d_\eta.$
	\end{enumerate}
\end{thm}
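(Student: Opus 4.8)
The plan is to reduce everything to the characterization of distributions on a single compact Lie group recalled at the end of Section~2, transported through the identification $\widehat{G}\cong\widehat{G_1}\times\widehat{G_2}$ given by $[\phi]=[\xi\otimes\eta]\leftrightarrow([\xi],[\eta])$, together with the two bridges already established above: the identity \eqref{normHS}, which says $\|\widehat{u}(\xi\otimes\eta)\|_{\HS}=\|\doublehat{\,u\,}\!(\xi,\eta)\|_{\HS}$, and the two-sided estimate \eqref{pseudonorm}, which compares $\jp{\xi\otimes\eta}$ with $\jp{\xi}+\jp{\eta}$.

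For $(i)\Leftrightarrow(ii)$ I would argue as follows. By the distributional characterization on $G$, $u\in\DG$ if and only if there are $C,N>0$ with $\|\widehat{u}(\phi)\|_{\HS}\le C\jp{\phi}^{N}$ for all $[\phi]\in\widehat{G}$. Writing $[\phi]=[\xi\otimes\eta]$ and applying \eqref{normHS}, the left-hand side equals $\|\doublehat{\,u\,}\!(\xi,\eta)\|_{\HS}$; applying \eqref{pseudonorm} and the monotonicity of $t\mapsto t^{N}$ on $[1,\infty)$, the quantity $\jp{\phi}^{N}$ lies between $2^{-N}(\jp{\xi}+\jp{\eta})^{N}$ and $(\jp{\xi}+\jp{\eta})^{N}$. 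Since $[\phi]\leftrightarrow([\xi],[\eta])$ is a bijection, quantifying over $[\phi]\in\widehat{G}$ is the same as quantifying over all pairs $([\xi],[\eta])\in\widehat{G_1}\times\widehat{G_2}$, and after absorbing the factor $2^{N}$ into the constant the two estimates become equivalent. This gives $(i)\Leftrightarrow(ii)$.

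For $(ii)\Rightarrow(iii)$ there is nothing to do: each entry is dominated by the Hilbert--Schmidt norm, $|\doublehat{\,u\,}\!(\xi,\eta)_{mn_{rs}}|\le\|\doublehat{\,u\,}\!(\xi,\eta)\|_{\HS}$. For $(iii)\Rightarrow(ii)$ I would sum the squares of the $(d_\xi d_\eta)^2$ entries displayed in \eqref{normHS}, obtaining $\|\doublehat{\,u\,}\!(\xi,\eta)\|_{\HS}\le d_\xi d_\eta\,C(\jp{\xi}+\jp{\eta})^{N}$, and then control the dimension factor by the bound \eqref{dimension} applied separately in $G_1$ and $G_2$: $d_\xi d_\eta\le C'\jp{\xi}^{\dim G_1/2}\jp{\eta}^{\dim G_2/2}\le C'(\jp{\xi}+\jp{\eta})^{\dim G/2}$. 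This upgrades the estimate to the form in $(ii)$ with $N$ replaced by $N+\dim G/2$, completing the cycle.

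The whole argument is essentially bookkeeping once \eqref{normHS} and \eqref{pseudonorm} are available; the only step requiring a little care — and the nearest thing to an obstacle — is the passage from the componentwise bound to the Hilbert--Schmidt bound in $(iii)\Rightarrow(ii)$, where the number of matrix entries grows with $[\xi]$ and $[\eta]$ and has to be absorbed into the polynomial weight via \eqref{dimension}. This parallels exactly the corresponding step in the proof of Theorem~\ref{smoo}, where the same dimension factor is harmless because it is defeated by the arbitrarily fast decay.
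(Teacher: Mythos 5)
Your argument is correct and follows exactly the route the paper intends: the theorem is stated there without a detailed proof, being asserted to follow from \eqref{pseudonorm} and \eqref{normHS} together with the one-group characterization of distributions, which is precisely your reduction for $(i)\Leftrightarrow(ii)$. Your handling of $(ii)\Leftrightarrow(iii)$, absorbing the factor $d_\xi d_\eta$ into the polynomial weight via \eqref{dimension}, is the standard bookkeeping the paper leaves implicit and is carried out correctly.
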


In the next results we will investigate when a sequence of partial Fourier coefficients can define a smooth function or a distribution.

\begin{thm}\label{caracsmooth}
	Let $G_1$ and $G_2$ be compact Lie groups, $G=G_1\times G_2$, and let $\{\widehat{f}(\:\cdot \:, \eta)_{rs} \}$ be a sequence of functions on $G_1$. Define
	$$
	f(x_1,x_2) := \sum_{[\eta]\in \widehat{G_2}} d_\eta \sum_{r,s=1}^{d_\eta} \widehat{f}(x_1, \eta)_{rs} \eta_{sr}(x_2).
	$$ 
	Then $f \in C^\infty(G)$ if and only if $\widehat{f}(\: \cdot\:,\eta)_{rs} \in C^\infty(G_1)$, for all $[\eta]\in\widehat{G_2}$, $1 \leq r,s \leq d_\eta$ and for every $\beta \in \N_0^n$ and $\ell >0$ there exist $C_{\beta \ell} > 0$ such that
	\begin{equation*}\label{carac}
	\bigl|\partial^\beta \widehat{f}(x_1,\eta)_{rs}\bigl| \leq C_{\beta\ell} \langle \eta \rangle ^{-\ell}, \quad \forall x_1\in G_1, \ [\eta]\in \widehat{G}, \ 1\leq r,s\leq d_\eta.
	\end{equation*}
\end{thm}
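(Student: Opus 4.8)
The plan is to reduce the statement to the already–established characterization of smoothness in terms of the double partial Fourier coefficients (Theorem~\ref{smoo}): I would reconstruct $f$ from the prescribed sequence, compute $\doublehat{\,f\,}\!(\xi,\eta)$, and check its decay. Two facts will be used throughout. The first is differentiation under the integral sign, which is legitimate because $G_1,G_2$ are compact and everything in sight is smooth (the difference quotients converge uniformly). The second is that $\mathcal{L}_{G_i}$, acting in the $x_i$-variable, is a differential operator on $G=G_1\times G_2$ which is symmetric on $L^2(G_i)$, has real coefficients, and satisfies $(I-\mathcal{L}_{G_i})^k\zeta=\langle\zeta\rangle^{2k}\zeta$ on every matrix coefficient $\zeta$ of an irreducible representation of $G_i$. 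Here $\partial^\beta$ always denotes a left-invariant operator on $G_1$.

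\textbf{Necessity.} Assuming $f\in C^\infty(G)$, I would write $\widehat f(x_1,\eta)_{rs}=\int_{G_2}f(x_1,x_2)\overline{\eta(x_2)_{sr}}\,dx_2$ and differentiate under the integral to obtain $\partial^\beta\widehat f(x_1,\eta)_{rs}=\int_{G_2}(\partial^\beta_{x_1}f)(x_1,x_2)\overline{\eta(x_2)_{sr}}\,dx_2$, which is continuous in $x_1$; hence $\widehat f(\cdot,\eta)_{rs}\in C^\infty(G_1)$. For the decay I would insert $(I-\mathcal{L}_{G_2})^k$ in the $x_2$-variable, on which it acts on $\overline{\eta(x_2)_{sr}}$ as multiplication by $\langle\eta\rangle^{2k}$, and move it onto $f$ by self-adjointness:
$$ \langle\eta\rangle^{2k}\,\partial^\beta\widehat f(x_1,\eta)_{rs}=\int_{G_2}\big[(I-\mathcal{L}_{G_2})^k\partial^\beta_{x_1}f\big](x_1,x_2)\,\overline{\eta(x_2)_{sr}}\,dx_2 . $$
Since $|\eta(x_2)_{sr}|\le1$ by unitarity and the Haar measure is normalized, $\langle\eta\rangle^{2k}\,|\partial^\beta\widehat f(x_1,\eta)_{rs}|\le\|(I-\mathcal{L}_{G_2})^k\partial^\beta f\|_{L^\infty(G)}<\infty$; given $\ell>0$, choosing $k\ge\ell/2$ and using $\langle\eta\rangle\ge1$ gives exactly the claimed bound.

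\textbf{Sufficiency.} Conversely, assume $\widehat f(\cdot,\eta)_{rs}\in C^\infty(G_1)$ for all $[\eta],r,s$ and that the stated estimates hold. First, taking $\beta=0$ and $\ell$ large, combined with the dimension bound $d_\eta\le C\langle\eta\rangle^{(\dim G_2)/2}$ of \eqref{dimension}, with $|\eta_{sr}(x_2)|\le1$, and with the standard fact that $\sum_{[\eta]\in\widehat{G_2}}\langle\eta\rangle^{-q}<\infty$ for $q$ large, shows the series defining $f$ converges absolutely and uniformly on $G$; hence $f$ is continuous, so $f\in L^1(G)$, and integrating the series term by term over $G_2$ against $\overline{\eta(x_2)_{sr}}$ and using the Peter--Weyl orthogonality \eqref{ortho} identifies the $\eta$-partial Fourier coefficients of this $f$ with the prescribed functions. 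Therefore
$$ \doublehat{\,f\,}\!(\xi,\eta)_{mn_{rs}}=\int_{G_1}\widehat f(x_1,\eta)_{rs}\,\overline{\xi(x_1)_{nm}}\,dx_1 $$
is the $\xi$-Fourier coefficient of $\widehat f(\cdot,\eta)_{rs}\in C^\infty(G_1)$. Running the same self-adjointness argument on $G_1$ with $(I-\mathcal{L}_{G_1})^k$ — a combination of operators $\partial^\beta$ with $|\beta|\le 2k$, so that the hypothesis gives $\|(I-\mathcal{L}_{G_1})^k\widehat f(\cdot,\eta)_{rs}\|_{L^\infty(G_1)}\le C_{k,\ell}\langle\eta\rangle^{-\ell}$ — yields
$$ \big|\doublehat{\,f\,}\!(\xi,\eta)_{mn_{rs}}\big|\le C_{k,\ell}\,\langle\xi\rangle^{-2k}\langle\eta\rangle^{-\ell} $$
for all $k,\ell$. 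Because $\langle\xi\rangle,\langle\eta\rangle\ge1$ imply $\langle\xi\rangle^{-N}\langle\eta\rangle^{-N}\le2^{N}(\langle\xi\rangle+\langle\eta\rangle)^{-N}$, choosing $2k,\ell\ge N$ gives $|\doublehat{\,f\,}\!(\xi,\eta)_{mn_{rs}}|\le C_N(\langle\xi\rangle+\langle\eta\rangle)^{-N}$ for every $N>0$, and condition~(iii) of Theorem~\ref{smoo} then forces $f\in C^\infty(G)$.

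\textbf{Where the difficulty lies.} The conceptual steps are short, and I expect the main thing to get right is in the sufficiency direction: one must check that the series converges well enough that the resulting $f$ genuinely has the prescribed functions as its $\eta$-partial Fourier coefficients — so that Theorem~\ref{smoo} applies to $f$ itself and not merely to the formal series — and one must be careful to extract decay \emph{simultaneously} in $\langle\xi\rangle$ and in $\langle\eta\rangle$ before recombining them via the elementary inequality above. The remaining ingredients (differentiation under the integral, self-adjointness of $I-\mathcal{L}_{G_i}$, summability of $\langle\eta\rangle^{-q}$, and the dimension estimate \eqref{dimension}) are routine; alternatively, sufficiency can be proved directly by differentiating the series term by term, splitting a left-invariant operator on $G$ as $\partial^\beta_{x_1}\partial^\gamma_{x_2}$ and bounding $\partial^\gamma_{x_2}\eta_{sr}(x_2)$ via the symbol estimate \eqref{estimateop}, should one prefer not to invoke Theorem~\ref{smoo}.
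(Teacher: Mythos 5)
Your proof is correct and follows essentially the same route as the paper: both directions transfer powers of $(I-\mathcal{L}_{G_i})$ between the function and the matrix coefficients via self-adjointness and then invoke Theorem~\ref{smoo}. The only difference is that you explicitly verify the uniform convergence of the defining series and the identification of its partial Fourier coefficients before applying Theorem~\ref{smoo} (a point the paper leaves implicit), which is a welcome but minor refinement.
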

\begin{proof}
$(\impliedby)$ It is sufficient to consider $N\in \N$ in Theorem \ref{smoo} to conclude that $f\in C^\infty{(G)}$. Notice that
$$
\widehat{\mathcal{L}_{G_1}g}(\xi)_{mn} = \jp{\mathcal{L}_{G_1}g,\overline{\xi_{nm}}} = \jp{g,\mathcal{L}_{G_1}\overline{\xi_{nm}}} = -\nu_{[\xi]} \jp{g,\overline{\xi_{nm}}} = -\nu_{[\xi]} \widehat{g}(\xi)_{mn},
$$
for all $g \in C^{\infty}(G_1)$, $[\xi] \in \widehat{G_1}$, and $1 \leq m,n \leq d_\xi$. In particular, for $N\in \N$ we obtain
\begin{align*}
\nu_{[\xi]}^N |\doublehat{\,f\,}\!(\xi,\eta)_{rs_{mn}}| &= \left|\widehat{\mathcal{L}_{G_1}^N{\widehat{f}}}(\xi,\eta)_{rs_{mn}} \right|\nonumber \\ &= \left|\int_{G_1} \mathcal{L}_{G_1}^N\widehat{f}(x_1,\eta)_{rs} \overline{\xi(x_1)_{nm}} \, dx_1 \right| \nonumber \\
&\leq \int_{G_1}  |\mathcal{L}_{G_1}^N\widehat{f}(x_1,\eta)_{rs}| |{\xi(x_1)_{nm}}|\, dx_1\nonumber \\
&\leq\left(\int_{G_1}  |\mathcal{L}_{G_1}^N\widehat{f}(x_1,\eta)_{rs}|^2\, dx_1\right)^{1/2} \left(\int_G |{\xi(x_1)_{nm}}|^2\, dx_1 \right)^{1/2}\nonumber\\
 &\leq  \frac{1}{\sqrt{d_\xi}} \sum_{|\beta|=2N} \max_{x_1\in G_1} | \partial^\beta\widehat{f}(x_1,\eta)_{rs}|  \\&\leq \sum_{|\beta|=2N} \max_{x_1\in G_1} | \partial^\beta\widehat{f}(x_1,\eta)_{rs}|.
\end{align*}

Notice that there exists $C>0$ such that $\jp{\xi} \leq C \nu_{[\xi]}$ for all non-trivial $[\xi] \in \widehat{G_1}$. Thus for all $\ell=N$ we have 
$$ |\doublehat{\,f\,}\!(\xi,\eta)_{rs_{mn}}| \leq C_{N}\jp{\xi}^{-N}\jp{\eta}^{-N} \leq C_{N}2^N (\jp{\xi} + \jp{\eta})^{-N}.
$$
Therefore $f \in C^{\infty}(G)$.

$(\implies)$ Let $E_2:=(I-\mathcal{L}_{G_2})^{1/2}$. Since $f\in C^\infty(G)$, for all $\beta \in N_0^n$ and $N \in \N_0$ we have $\partial^\beta E_2^Nf \in C^\infty(G)$ and then, by the compactness of $G$, there exists $C_{\beta N}\geq0$ such that
\begin{equation}\label{betaN}
|\partial^\beta E_2^N f(x_1,x_2)| \leq C_{\beta N}, \qquad \forall (x_1,x_2) \in G_1 \times G_2.
\end{equation}
Fix $\eta\in \Rep(G_2)$, $1\leq r,s \leq d_\eta$. We already know that $\widehat{f}(\: \cdot\:,\eta)_{rs} \in C^\infty(G_1)$. Moreover

\begin{align*}
|\jp{\eta}^N\partial^\beta\widehat{f}(x_1,\eta)_{rs}| &= |\partial^\beta\widehat{E_2^Nf}(x_1,\eta)_{rs}| \nonumber \\ &= \left|\partial^\beta\int_{G_2} E_2^Nf(x_1,x_2) \overline{\eta(x_2)_{sr}} \, dx_2\right| \nonumber \\
&\leq \int_{G_2}|\partial^\beta E_2^Nf(x_1,x_2)| |\overline{\eta(x_2)_{sr}}| \, dx_2 \nonumber \\
&\leq \left(\int_{G_2} |\partial^\beta E_2^Nf(x_1,x_2)|^2 \, dx_2\right)^{1/2} \left(\int_{G_2} |\eta(x_2)_{sr}|^2 \, dx_2 \right)^{1/2} \nonumber  \\
&\stackrel{\eqref{betaN}}{\leq} \tfrac{1}{\sqrt{d_\eta}}C_{\beta N} \nonumber.
\end{align*}
Therefore,
$$
|\partial^\beta\widehat{f}(x_1,\eta)_{rs}| \leq C_{\beta N} \jp{\eta}^{-N},
$$
for all $x_1 \in G_1$, $[\eta] \in \widehat{G_2}$, $1\leq r,s\leq d_\eta$.
\end{proof}

\begin{thm}\label{caracdist}
		Let $G_1$ and $G_2$ be compact Lie groups, set $G=G_1\times G_2$, and let $\bigl\{\widehat{u}(\: \cdot \:,\eta)_{rs} \bigr\}$ be a sequence of distributions on $G_1$.  Define \begin{equation*}
	u=\sum_{[\eta]\in \widehat{G}} d_\eta \sum_{r,s=1}^{d_\eta} \widehat{u}(\:\cdot\:,\eta)_{rs}{\eta_{sr}}.
	\end{equation*} 
	Then $u \in \DG$ if and only if there exist $K \in \N$ and $C>0$ such that
	\begin{equation}\label{pk}
	\bigl| \jp{\widehat{u}(\cdot,\eta)_{rs},\varphi}\bigr| \leq C \, p_K(\varphi) \langle \eta \rangle ^{K}, 
	\end{equation} 
	for all $\varphi \in C^{\infty}(G_1)$ and $[\eta]\in \widehat{G}$, where $p_K(\varphi) := \sum\limits_{|\beta| \leq K} \|\partial^\beta \varphi\|_{L^\infty(G_1)}.$
\end{thm}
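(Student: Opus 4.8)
First, ``$u\in\DG$'' means that the series defining $u$ converges in $\DG$. The proof splits into the two implications, linked by the identity $\jp{\widehat{u}(\cdot,\eta)_{rs},\varphi}=\jp{u,\varphi\times\overline{\eta_{sr}}}_G$, which holds once the series converges --- pairing $\sum_{[\eta']}d_{\eta'}\sum_{r',s'}\widehat{u}(\cdot,\eta')_{r's'}\,\eta'_{s'r'}$ against $\varphi\times\overline{\eta_{sr}}$ and using the Peter--Weyl orthogonality relation $\int_{G_2}\eta'(x_2)_{s'r'}\,\overline{\eta(x_2)_{sr}}\,dx_2=\tfrac{1}{d_\eta}\delta_{[\eta'][\eta]}\delta_{s's}\delta_{r'r}$ collapses the double series to the single term $\jp{\widehat{u}(\cdot,\eta)_{rs},\varphi}$. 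Besides the estimates \eqref{estimateop} and \eqref{dimension} from Section 2, the only extra ingredient is that $\sum_{[\eta]\in\widehat{G_2}}\jp{\eta}^{-s}<\infty$ for $s$ large, which is standard (e.g.\ from Weyl's asymptotics for $\mathcal{L}_{G_2}$).

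Suppose first that $u\in\DG$; I derive \eqref{pk}. Being a distribution on the compact manifold $G$, $u$ satisfies $|\jp{u,\Psi}|\le C_0\sum_{|\delta|\le L}\|\partial^\delta\Psi\|_{L^\infty(G)}$ for some $C_0>0$, $L\in\N$ and all $\Psi\in C^\infty(G)$. Apply this with $\Psi=\varphi\times\overline{\eta_{sr}}$: every $\partial^\delta$ on $G=G_1\times G_2$ factors as $\partial^{\delta'}\partial^{\delta''}$ with $\partial^{\delta'}$ acting on $G_1$, $\partial^{\delta''}$ on $G_2$ and $|\delta'|+|\delta''|=|\delta|$, so $\partial^\delta(\varphi\times\overline{\eta_{sr}})=(\partial^{\delta'}\varphi)\times(\partial^{\delta''}\overline{\eta_{sr}})$; the first factor is at most $p_L(\varphi)$, while from $(\partial^{\delta''}\eta)(x_2)=\eta(x_2)\,\sigma_{\partial^{\delta''}}(\eta)$, $|\eta(x_2)_{sk}|\le1$, \eqref{estimateop} and \eqref{dimension} one gets $\|\partial^{\delta''}\eta_{sr}\|_{L^\infty(G_2)}\le d_\eta\,C_0^{|\delta''|}\jp{\eta}^{|\delta''|}\le C\jp{\eta}^{L+\dim G_2/2}$. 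Summing over the finitely many $\delta$ with $|\delta|\le L$ yields $|\jp{\widehat{u}(\cdot,\eta)_{rs},\varphi}|\le C\,p_L(\varphi)\,\jp{\eta}^{L+\dim G_2/2}$, which is \eqref{pk} with $K:=L+\lceil\dim G_2/2\rceil$.

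Conversely, assume \eqref{pk}. Fix $\Psi\in C^\infty(G)$; it suffices to show $\sum_{[\eta]}d_\eta\sum_{r,s=1}^{d_\eta}\jp{\widehat{u}(\cdot,\eta)_{rs}\,\eta_{sr},\Psi}$ converges absolutely with a bound by a fixed continuous seminorm of $\Psi$, for then $\jp{u,\Psi}$ is well defined for all $\Psi$ and $u$ is a continuous linear functional on $C^\infty(G)$. The $([\eta],r,s)$-term equals $\jp{\widehat{u}(\cdot,\eta)_{rs},\Phi_{\eta,r,s}}$, where $\Phi_{\eta,r,s}(x_1)=\int_{G_2}\eta(x_2)_{sr}\,\Psi(x_1,x_2)\,dx_2\in C^\infty(G_1)$. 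The crux is to turn the growth $\jp{\eta}^{K}$ in \eqref{pk} into decay: since $\eta_{sr}=\jp{\eta}^{-2M}(I-\mathcal{L}_{G_2})^{M}\eta_{sr}$ for every $M\in\N$, integrating by parts $M$ times in $x_2$ (self-adjointness of $I-\mathcal{L}_{G_2}$ on the closed manifold $G_2$) gives, for every left-invariant $\partial^{\beta}$ on $G_1$, $\partial^{\beta}\Phi_{\eta,r,s}(x_1)=\jp{\eta}^{-2M}\int_{G_2}\eta(x_2)_{sr}\,\big((I-\mathcal{L}_{G_2})^{M}\partial^{\beta}\Psi\big)(x_1,x_2)\,dx_2$, the Laplacian acting in $x_2$. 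As $(I-\mathcal{L}_{G_2})^{M}$ is a differential operator of order $2M$ and $|\eta(x_2)_{sr}|\le1$, this gives $p_K(\Phi_{\eta,r,s})\le C_M\,\jp{\eta}^{-2M}\,q_{K+2M}(\Psi)$ with $q_\ell(\Psi):=\sum_{|\delta|\le\ell}\|\partial^\delta\Psi\|_{L^\infty(G)}$. Combined with \eqref{pk}, each term is at most $C\,\jp{\eta}^{K-2M}q_{K+2M}(\Psi)$ uniformly in $r,s$; summing the $d_\eta^2$ pairs $(r,s)$ and the outer $d_\eta$, and using \eqref{dimension} as $d_\eta^{3}\le C\jp{\eta}^{3\dim G_2/2}$, the series is dominated by $C\,q_{K+2M}(\Psi)\sum_{[\eta]\in\widehat{G_2}}\jp{\eta}^{\,K-2M+3\dim G_2/2}$, which is finite once $M$ is taken large enough. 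Hence $|\jp{u,\Psi}|\le C\,q_{K+2M}(\Psi)$ and $u\in\DG$.

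The remaining points are routine and I would treat them briefly: that each $\widehat{u}(\cdot,\eta)_{rs}\eta_{sr}$ is a genuine distribution on $G$ (since $\Psi\mapsto\big(x_1\mapsto\int_{G_2}\eta_{sr}(x_2)\Psi(x_1,x_2)\,dx_2\big)$ is continuous $C^\infty(G)\to C^\infty(G_1)$), the legitimacy of differentiating under the integral sign, the monotonicity $p_\ell\le p_{\ell'}$ and $q_\ell\le q_{\ell'}$ for $\ell\le\ell'$, and the fact that $(I-\mathcal{L}_{G_2})^M$ is a constant-coefficient combination of the operators $\partial^\gamma$ with $|\gamma|\le 2M$. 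The step that genuinely requires care --- and the main obstacle --- is the integration-by-parts/summation estimate in the converse direction, where one must track the three contributions to the exponent of $\jp{\eta}$ (the $K$ coming from \eqref{pk}, the $3\dim G_2/2$ coming from $d_\eta^3$, and the gain $-2M$ from the smoothing) and only then choose $M$ large enough that $K-2M+3\dim G_2/2$ lies below the summability threshold.
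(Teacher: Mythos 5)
Your proof is correct, but it takes a genuinely different route from the paper's in both directions. For the implication $u \in \DG \Rightarrow$ \eqref{pk}, the paper goes through Fourier analysis: it invokes Theorem \ref{dist} to get $|\doublehat{\,u\,}\!(\xi,\eta)_{rs_{mn}}| \leq C(\jp{\xi}+\jp{\eta})^K$, expands $\jp{u,\varphi\times\overline{\eta_{sr}}}$ in the double Fourier series, collapses the $\eta$-sum by orthogonality, and then controls the remaining $\xi$-sum by Cauchy--Schwarz against $\sum d_\xi^2\jp{\xi}^{-2\dim G_1}$ and Plancherel, landing on $p_{2L}(\varphi)$ via $\|(I-\mathcal{L}_{G_1})^L\varphi\|_{L^2}\leq Cp_{2L}(\varphi)$; you instead use the finite-order seminorm estimate for a distribution on the compact product manifold together with the symbol bounds \eqref{estimateop}, \eqref{symbol}, \eqref{dimension} applied to $\partial^{\delta''}\overline{\eta_{sr}}$, which is shorter and avoids Plancherel entirely. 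For the converse, the paper simply tests \eqref{pk} on $\varphi=\overline{\xi_{nm}}$, bounds $p_K(\xi_{nm})\leq C\jp{\xi}^{2M+K}$ by the same symbol estimates, and then delegates the conclusion to Theorem \ref{dist}(ii)$\Rightarrow$(i); you prove convergence of the defining series in $\DG$ directly, trading the growth $\jp{\eta}^K$ for decay $\jp{\eta}^{-2M}$ via $(I-\mathcal{L}_{G_2})^M$ and summing with $d_\eta^3\lesssim\jp{\eta}^{3\dim G_2/2}$. What each buys: the paper's argument is economical because it reuses Theorems \ref{smoo}/\ref{dist} as black boxes (at the cost of leaving the convergence of the series implicit in those theorems), while yours is self-contained and makes the convergence of $\sum_{[\eta]}d_\eta\sum_{r,s}\widehat{u}(\cdot,\eta)_{rs}\eta_{sr}$ explicit, at the cost of redoing some of the machinery those theorems encapsulate. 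The one point you should make fully explicit in a final write-up is the identity $\jp{\widehat{u}(\cdot,\eta)_{rs},\varphi}=\jp{u,\varphi\times\overline{\eta_{sr}}}_G$ linking the \emph{given} sequence to the sum $u$ (your orthogonality remark is the right justification, but it deserves a displayed computation since the hypothesis only hands you an abstract sequence of distributions, not a priori the partial Fourier coefficients of $u$); the paper glosses over the same point.
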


\begin{proof}
$(\Longleftarrow)$ Take $\varphi=\overline{\xi_{nm}}$, $[\xi]\in \widehat{G_1}$, $1\leq m,n \leq d_\xi$. Let $\beta \in \N_0^n$, $|\beta| \leq K$, with $K$ as in \eqref{pk}. Since the symbol of $\partial^\beta$ at $x_1\in G_1$ and ${\xi \in \Rep(G_1)}$ is given by
$$
\sigma_{\partial^\beta}(x_1,\xi) = \xi(x_1)^*(\partial^\beta \xi)(x_1),
$$
we have
\begin{align*}
|\partial^\beta \xi_{nm}(x_1)| &= \left| \sum_{i=1}^{d_\xi} \xi_{ni}(x) \sigma_{\partial^\beta}(\xi)_{im} \right| \nonumber \\
&\leq \sum_{i=1}^{d_\xi} |\xi_{ni}(x)| |\sigma_{\partial^\beta}(\xi)_{im}| \nonumber \\
&\leq \left(  \sum_{i=1}^{d_\xi} |\xi_{ni}(x)^2| \right)^{1/2} \left(\sum_{i=1}^{d_\xi}|\sigma_{\partial^\beta}(\xi)_{im}|^2  \right)^{1/2} \nonumber 
\end{align*}
Let $M\in\Z$ satisfying $M \geq \frac{\dim G_1}{2}$. By \eqref{estimatexi} we have
$$ 
\left(\sum_{i=1}^{d_\xi} |\xi_{ni}(x)^2|\right)^{1/2}  \leq \left(\sum_{i=1}^{d_\xi}\|\xi_{ni}\|_{L^\infty(G_1)}^2\right)^{1/2}   \leq C_M\sqrt{d_\xi}\jp{\xi}^{M}
$$
and by \eqref{dimension} there exists $C>0$ such that
$$
d_\xi \leq C\jp{\xi}^M.
$$
Moreover, notice that
$$
\left(\sum_{i=1}^{d_\xi}|\sigma_{\partial^\beta}(\xi)_{im}|^2  \right)^{1/2} \leq \|\sigma_{\partial^\beta}(\xi)\|_{\HS} \leq \sqrt{d_\xi}\|\sigma_{\partial^\beta}(\xi)\|_{op} \leq \sqrt{d_\xi}C_0^{|\beta|}  \jp{\xi}^{|\beta|},
$$
where the last inequalities come from \eqref{estimateop} and \eqref{symbol}. Hence
\begin{align*}
|\partial^\beta \xi_{nm}(x_1)| &\leq C \jp{\xi}^M\sqrt{d_\xi} \|\sigma_{\partial^\beta}(\xi)\|_{\HS}\nonumber \\
&\leq C \jp{\xi}^M{d_\xi} \|\sigma_{\partial^\beta}(\xi)\|_{op} \nonumber \\
&\leq CC_0^{|\beta|} \jp{\xi}^{2M+|\beta|} \nonumber.
\end{align*}
Then
$$
p_K(\overline{\xi_{nm}}) = p_K(\xi_{nm}) \leq C \jp{\xi}^{2M+K}.
$$
Hence
\begin{align*}
\bigl| \doublehat{\,u\,}\!(\xi,\eta)_{rs_{mn}}\bigr| = \bigl| \jp{\widehat{u}(\cdot,\eta)_{rs},\overline{\xi_{nm}}}\bigr| &\leq C \, p_K(\overline{\xi_{nm}}) \langle \eta \rangle ^{K} \nonumber \\ 	
&\leq C  \jp{\xi}^{2M+K} \langle \eta \rangle^{2M+K} \nonumber \\
& \leq  C (\langle \xi \rangle + \langle \eta \rangle)^{2(2M+K)}. \nonumber
\end{align*}
Therefore $u \in \mathcal{D}'(G)$.

$ (\Longrightarrow) $ Since $u \in \mathcal{D}'(G)$, then there exist $C>0$ and $K\in\N$ such that
\begin{equation}\label{distr}
\bigl|\doublehat{\,u\,}\!(\xi,\eta)_{rs_{mn}}\bigr| \leq C (\langle\xi\rangle + \langle \eta \rangle)^K, 
\end{equation}
for all $[\xi] \in \widehat{G_1}, [\eta] \in \widehat{G_2}, \  1 \leq r,s\leq d_\eta$, and $1\leq m,n\leq d_\xi$
and 
$$
u=\sum_{[\xi]\in \widehat{G_1}} \sum_{[\eta] \in \widehat{G}} d_\xi d_\eta \sum_{m,n=1}^{d_\xi}\sum_{r,s=1}^{d_\eta} \doublehat{\,u\,}\!(\xi,\eta)_{rs_{mn}} \xi_{nm} \eta_{sr}.
$$

For $\varphi \in C^\infty(G_1)$ we have
{\small 
\begin{align*}
|\jp{\widehat{u}(\cdot,\eta)_{rs},\varphi}| &= |(u,\varphi\times\overline{\eta_{sr}})| \nonumber \\
&= \left|  \sum_{[\xi]\in\widehat{G_1}} \sum_{[\eta] \in \widehat{G}} d_\xi d_\eta \sum_{m,n=1}^{d_\xi}\sum_{k,\ell=1}^{d_\eta} \doublehat{\,u\,}\!(\xi,\eta)_{k\ell_{mn}} \jp{\xi_{nm},\varphi}_{G_1} \jp{\eta_{\ell k}, \overline{\eta_{sr}}}_{G_2}\right|. \nonumber
\end{align*}}

Notice that $\jp{\eta_{\ell k}, \overline{\eta_{sr}}}_{G_2} = \frac{1}{d_\eta}\delta_{\ell s} \delta_{kr}$, since the set $\mathcal{B}$ is orthonormal (see \eqref{ortho}).
 Moreover, $\widehat{\varphi}(\overline{\xi})_{mn} =\jp{\xi_{nm},\varphi}_{G_1}$. So

\begin{align*}
|\jp{\widehat{u}(\cdot,\eta)_{rs},\varphi}| &{=} \left|\sum_{[\xi]\in \widehat{G_1}}d_\xi \sum_{m,n=1}^{d_\xi} \doublehat{\,u\,}\!(\xi,\eta)_{rs_{mn}} \widehat{\varphi}(\overline{\xi})_{mn} \right| \nonumber \\
&\leq \sum_{[\xi]\in \widehat{G_1}}d_\xi \sum_{m,n=1}^{d_\xi} \left|\doublehat{\,u\,}\!(\xi,\eta)_{rs_{mn}} \right|\left|\widehat{\varphi}(\overline{\xi})_{mn} \right| \nonumber \\
&\leq C\sum_{[\xi]\in \widehat{G_1}}d_\xi \sum_{m,n=1}^{d_\xi} (\jp{\xi}+\jp{\eta})^K\left|\widehat{\varphi}(\overline{\xi})_{mn} \right|, \nonumber 
\end{align*}
where the last inequality comes from \eqref{distr}. Notice that for all $K\in \N$ we have $(\jp{\xi}+\jp{\eta})^K \leq 2^K \jp{\xi}^K \jp{\eta}^K$. In addition, we have
$$
\sum_{m,n=1}^{d_\xi}|\widehat{\varphi}({\xi})_{mn}| \leq \left(d_\xi^2\sum_{m,n=1}^{d_\xi}|\widehat{\varphi}({\xi})_{mn}|^2\right)^{1/2} = d_\xi \|\widehat{\varphi}(\xi)\|_{\HS}.
$$
Since $\jp{\xi} = \jp{\overline{\xi}}$ and the summation is over all $\widehat{G_1}$, we have
\begin{align*}
 \sum_{[\xi]\in \widehat{G_1}} d_\xi \langle \xi \rangle^{K}\sum_{m,n=1}^{d_\xi}|\widehat{\varphi}(\overline{\xi})_{mn}| &=  \sum_{[\xi]\in \widehat{G_1}} d_{\overline{\xi}} \langle \overline{\xi} \rangle^{K}\sum_{m,n=1}^{d_\xi}|\widehat{\varphi}({\xi})_{mn}| \\ &=  \sum_{[\xi]\in \widehat{G_1}} d_\xi \langle \xi \rangle^{K}\sum_{m,n=1}^{d_\xi}|\widehat{\varphi}({\xi})_{mn}|.
 \end{align*}
 Thus
\begin{align*}
|\jp{\widehat{u}(\cdot,\eta)_{rs},\varphi}| 
& \leq  C\jp{\eta}^K \sum_{[\xi]\in \widehat{G_1}} d_\xi \langle \xi \rangle^{K}\sum_{m,n=1}^{d_\xi}|\widehat{\varphi}(\overline{\xi})_{mn}|\nonumber \\ 
& \leq  C\jp{\eta}^K \sum_{[\xi]\in \widehat{G_1}} d_\xi^2 \langle \xi \rangle^{K}\|\widehat{\varphi}(\xi)\|_{\HS}. \nonumber
\end{align*}
The series $\sum\limits_{[\xi]\in \widehat{G_1}} d_\xi^2 \langle \xi \rangle^{-2t}$ 
converges if and only if $t > \frac{\dim G_1}{2}$ (Lemma 3.1 of \cite{DR14}), which implies that there exists $C>0$ such that $d_\xi \leq C \jp{\xi}^{\dim G_1}$, for all $[\xi] \in \widehat{G_1}$. Hence,

\begin{align*}
|\jp{\widehat{u}(\cdot,\eta)_{rs},\varphi}|&= C \jp{\eta}^K \sum_{[\xi]\in \widehat{G_1}} \left(d_\xi \langle \xi \rangle^{-\dim G_1}\right) \left(d_\xi\jp{\xi}^{K+\dim G_1}\|\widehat{\varphi}(\xi)\|_{\HS}\right) \nonumber \\ &\leq  C \jp{\eta}^K \left(\sum_{[\xi]\in \widehat{G_1}} d_\xi^2 \langle \xi \rangle^{-2\dim G_1}\right)^{1/2} 	 \left(\sum_{[\xi]\in \widehat{G_1}} d_\xi^2\jp{\xi}^{2(K+\dim G_1)}\|\widehat{\varphi}(\xi)\|_{\HS}^2\right)^{1/2} \nonumber \\ 
&\leq C \jp{\eta}^K\left(\sum_{[\xi]\in \widehat{G_1}} d_\xi \jp{\xi}^{2(K+2\dim G_1)}\|\widehat{\varphi}(\xi)\|_{\HS}^2\right)^{1/2}. \nonumber 
\end{align*}

Let $L \in \N_0$ such that $K+2\dim G_1 \leq 2L$. So
\begin{align*}
|\jp{\widehat{u}(\cdot,\eta)_{rs},\varphi}|&\leq C \jp{\eta}^K\left(\sum_{[\xi]\in \widehat{G_1}} d_\xi \jp{\xi}^{4L}\|\widehat{\varphi}(\xi)\|_{\HS}^2\right)^{1/2} 
\nonumber \\&= C \jp{\eta}^K\left(\sum_{[\xi]\in \widehat{G_1}} d_\xi \|\widehat{E^{2L}_1\varphi}(\xi)\|_{\HS}^2\right)^{1/2} \nonumber \\ 
& =   C\jp{\eta}^K \|E^{2L}_1\varphi\|_{L^2(G_1)}\nonumber \\
&\leq  C \|E^{2L}_1\varphi\|_{L^2(G_1)} \jp{\eta}^{2L}\nonumber, 
\end{align*}
where $E_1 = (I-\mathcal{L}_{G_1})^{1/2}$, and the last equality comes from the Plancherel formula \eqref{plancherel}. Notice that
$$
\|E^{2L}_1\varphi\|_{L^2(G_1)} \leq \|E^{2L}_1\varphi\|_{L^\infty(G_1)} = \|(I-\mathcal{L}_{G_1})^L\varphi\|_{L^\infty(G_1)} \leq Cp_{2L}(\phi).
$$
Therefore,
$$
|\jp{\widehat{u}(\cdot,\eta)_{rs},\varphi}| \leq Cp_{2L}(\phi)\jp{\eta}^{2L}.
$$
\end{proof}

\section{Example: Global solvability of a vector field on $\mathbb{T}^1 \times \St$}
The 3-sphere $\St$ is a Lie group with respect to the quaternionic 
product of $\mathbb{R}^{4}$, and it is globally diffeomorphic and isomorphic to the
group $\mbox{SU}(2)$ of unitary $2\times 2$ matrices of determinant one,
with the usual matrix product.

Let $a(t)\in \mathbb{R}$ for $t\in\mathbb{T}^1$. Let $X$ be a normalized left-invariant vector field on $\St$.
We consider the operator
\begin{equation}\label{operator}
L=\partial_{t}+a(t)X.
\end{equation}
We are interested in solvability properties for the vector field $L$ on $\mathbb{T}^1\times\St$.
Let $\Sth$ be the unitary dual of $\St$. It consists of the
equivalence classes $[\textsf{t}^\ell]$ of the continuous irreducible unitary representations
$\textsf{t}^\ell:\St\to \C^{(2\ell+1)\times (2\ell+1)}$, $\ell\in\frac12\N_{0}$,
of matrix-valued functions satisfying
$\textsf{t}^\ell(xy)=\textsf{t}^\ell(x)\textsf{t}^\ell(y)$ and $\textsf{t}^\ell(x)^{*}=\textsf{t}^\ell(x)^{-1}$ for all
$x,y\in\St$.
We will use the standard convention of enumerating the matrix elements
$\textsf{t}^\ell_{mn}$ of $\textsf{t}^\ell$ using indices $m,n$ ranging between
$-\ell$ to $\ell$ with step one, i.e. we have $-\ell\leq m,n\leq\ell$
with $\ell-m, \ell-n\in\N_{0}.$

For a function
$f\in C^{\infty}(\St)$ we can define its Fourier coefficient at $\ell\in\frac12\N_{0}$ by
$$
\widehat{f}(\ell):=\int_{\St} f(x) \textsf{t}^{\ell}(x)^{*}dx\in\Ctl,
$$
where the integral is (always) taken with respect to the Haar measure on $\St$,
and with a natural extension to distributions.
The Fourier series becomes
$$
f(x)=\sum_{\ell\in\frac12\N_{0}} (2\ell+1) \mbox{Tr}\p{\textsf{t}^\ell(x)\widehat{f}(\ell)},
$$
with the Plancherel's identity taking the form
\begin{equation}\label{EQ:Plancherel}
\|f\|_{L^{2}(\St)}=\p{\sum_{\ell\in\frac12\N_{0}} (2\ell+1) 
	\|\widehat{f}(\ell)\|_{\HS}^{2}}^{1/2}=:
\|\widehat{f}\|_{\ell^{2}(\St)},
\end{equation}
which we take as the definition of the norm on the Hilbert space
$\ell^{2}(\Sth)$, and where 
$ \|\widehat{f}(\ell)\|_{\HS}^{2}=\emph{\Tr}(\widehat{f}(\ell)\widehat{f}(\ell^{*}))$ is the
Hilbert--Schmidt norm of the matrix $\widehat{f}(\ell)$.

Smooth functions and distributions on $\St$ can be characterized in terms of
their Fourier coefficients. Thus, we have 
$$
f\in C^{\infty}(\St)\Longleftrightarrow
\forall N \;\exists C_{N} \textrm{ such that }
\|\widehat{f}(\ell)\|_{\HS}\leq C_{N} (1+\ell)^{-N}, \forall \ell\in\frac12\N_{0}.
$$ 
Also, for distributions, we have
$$
u\in  \mathcal{D}'(\St)
\Longleftrightarrow
\exists M \;\exists C \textrm{ such that }
\|\widehat{u}(\ell)\|_{\HS}\leq C(1+\ell)^{M}, \forall \ell\in\frac12\N_{0}.
$$

Given an operator $T:C^{\infty}(\St)\to C^{\infty}(\St)$
(or even $T:C^{\infty}(\St)\to \mathcal{D}'(\St)$), we define its matrix symbol by
$$\sigma_{T}(x,\ell):=\textsf{t}^\ell(x)^{*} (T \textsf{t}^\ell)(x)\in \Ctl,$$ where
$T \textsf{t}^\ell$ means that we apply $T$ to the matrix components of $\textsf{t}^\ell(x)$.
In this case we can prove that 
\begin{equation}\label{EQ:T-op}
Tf(x)=\sum_{\ell\in\frac12\N_{0}} (2\ell+1) \mbox{Tr}\left({\textsf{t}^\ell(x)\sigma_{T}(x,\ell)\widehat{f}(\ell)}\right).
\end{equation}
The correspondence between operators and symbols is one-to-one, and
we will write $T_{\sigma}$ for the operator given by 
\eqref{EQ:T-op} corresponding to the symbol $\sigma(x,\ell)$.
The quantization \eqref{EQ:T-op} has been extensively studied in
\cite{livropseudo,RT13}, to which we
refer for its properties and for the corresponding symbolic calculus.

Using rotation on $\St$, without loss of generality we may assume that the vector
field $X$ has the symbol 
$$
\sigma_{X}(\ell)_{mn}=im\delta_{mn},
\;-\ell\leq m,n\leq \ell,
$$ 
with $\delta_{mn}$ standing for the Kronecker's delta.
Consequently, taking the Fourier transform of
$Lu=\partial_{t}u+a(t)Xu$ with respect to $x$, we get 
$$\widehat{Lu}(t,\ell)= \partial_{t} \widehat{u}(t,\ell)+ia(t)\widehat{Xu}(t,\ell).$$
Writing this in terms of matrix coefficients, we obtain
\begin{equation}\label{equation}
\widehat{Lu}(t,\ell)_{mn}= \partial_{t} \widehat{u}(t,\ell)_{mn}+ia(t) m
\widehat{u}(t,\ell)_{mn},\qquad
-\ell\leq m,n\leq \ell.
\end{equation}
Consequently, the equation $Lu=f$, where $f \in C^\infty(\mathbb{T}^1\times \St)$ is reduced to
\begin{equation}\label{EQ:LufFC}
\partial_{t} \widehat{u}(t,\ell)_{mn}+ia(t) m\
\widehat{u}(t,\ell)_{mn}=\widehat{f}(t,\ell)_{mn},\quad
\ell\in\frac12\N_{0},\;
-\ell\leq m,n\leq \ell.
\end{equation}

We say that an operator $P$ is global hypoelliptic if the conditions $Pu \in C^\infty(\mathbb{T}^1\times \St)$ and $u \in \mathcal{D}'(\mathbb{T}^1\times \St)$ imply $u \in C^\infty(\mathbb{T}^1\times \St)$. 

We point out that the operator $L$ defined in \eqref{operator} is not globally hypoelliptic for any $a\in C^{\infty}(\mathbb{T}^1)$. Indeed, define $u$ by its partial Fourier coefficients:
\begin{equation}\label{counter}
\widehat{u}(t,\ell)_{mn} = \left\{\begin{array}{ll} 1, &\mbox{if } \ell \in \N_0 \mbox{ and } m=n=0; \\
0, & \mbox{otherwise}.
\end{array} \right.
\end{equation}
Notice that by Theorem \ref{caracsmooth} we have $u \notin C^\infty(\TS)$ and by Theorem \ref{caracdist} we have $u \in \mathcal{D}'(\TS)$. Moreover, the function defined by \eqref{counter} is a homogeneous solution of \eqref{EQ:LufFC} for all $\ell \in \frac{1}{2}\N_0$, $-\ell \leq m,n \leq \ell$. Thus $Lu=0 \in C^\infty(\TS)$ which implies that $L$ is not globally hypoelliptic.

Let us turn our attention now to study the solvability of the operator $L$. Denote by $$a_0= \frac{1}{2\pi} \int_{0}^{2\pi} a(t)\, dt$$ and define
\begin{equation}\label{primitiva} A(t)=\int_{0}^{t} a(s) \, ds - a_0t.\end{equation}

Put
$$
v(t,\ell)_{mn}:= e^{imA(t)}\widehat{u}(t,\ell)_{mn}.
$$
In this way $v(t,\ell)_{mn}$ satisfies the equation
\begin{equation}\label{equationconst}
\partial_tv(t,\ell)_{mn}+ima_0v(t,\ell)_{mn}=g(t,\ell)_{mn}:= e^{imA(t)}\widehat{f}(t,\ell)_{mn}. 
\end{equation}
\begin{lemma}\label{solutionconstant}
	Let $\lambda \in \C$ and consider the equation
	\begin{equation}\label{equationconstant}
	\frac{d}{dt}u(t)+\lambda u(t)=f(t),
	\end{equation}
	where $f \in C^\infty(\mathbb{T}^1)$.
	
	If $\lambda \notin i\Z$ then the equation \eqref{equationconstant} has a unique solution that can be expressed by
	\begin{equation}\label{solutionminus}
	u(t)=\frac{1}{1-e^{-2\pi\lambda}} \int_0^{2\pi} e^{-\lambda s} f(t-s)\, ds,
	\end{equation}
	or equivalently,
	\begin{equation}\label{solutionplus}
		u(t)=\frac{1}{e^{2\pi\lambda}-1} \int_0^{2\pi} e^{\lambda r} f(t+r)\, dr.
	\end{equation}
	
	If $\lambda \in i\Z$ and $\int_0^{2\pi} e^{\lambda s}f(s)\, ds=0$ then we have that
	\begin{equation}\label{solutionzero}
	u(t)=e^{-\lambda t}\int_0^t e^{\lambda s}f(s) \, ds
	\end{equation}
	is a solution of the equation \eqref{equationconstant}.
\end{lemma}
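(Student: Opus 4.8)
The plan is to solve \eqref{equationconstant} by the classical integrating--factor method and then to impose $2\pi$--periodicity. Multiplying the equation by $e^{\lambda t}$ turns it into $\frac{d}{dt}\bigl(e^{\lambda t}u(t)\bigr)=e^{\lambda t}f(t)$, so that any solution satisfies $e^{\lambda t}u(t)=u(0)+\int_0^t e^{\lambda s}f(s)\,ds$ for every $t$. Evaluating at $t=2\pi$ and using $u(2\pi)=u(0)$ gives the identity $\bigl(e^{2\pi\lambda}-1\bigr)u(0)=\int_0^{2\pi}e^{\lambda s}f(s)\,ds$, which is the only constraint a periodic solution must obey.

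When $\lambda\notin i\Z$ we have $e^{2\pi\lambda}\neq 1$, so this identity determines $u(0)$ uniquely; hence there is at most one periodic solution, and substituting the value of $u(0)$ back into $u(t)=e^{-\lambda t}\bigl(u(0)+\int_0^t e^{\lambda s}f(s)\,ds\bigr)$ gives a closed formula. One can then match it with \eqref{solutionplus} either by the change of variables $s=t+r$ followed by splitting $\int_t^{t+2\pi}=\int_0^{2\pi}-\int_0^{t}+e^{2\pi\lambda}\int_0^{t}$ and using $f(s+2\pi)=f(s)$, or, more quickly, by verifying directly that the right--hand side of \eqref{solutionplus} solves the equation: differentiating under the integral sign (legitimate since $f\in C^\infty$) and integrating by parts in $r$, the boundary terms collapse to $\bigl(e^{2\pi\lambda}-1\bigr)f(t)$ because $f(t+2\pi)=f(t)$, which yields exactly $u'(t)=f(t)-\lambda u(t)$, while periodicity of this $u$ is immediate from that of $f$. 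The equivalence of \eqref{solutionminus} and \eqref{solutionplus} then follows from the substitution $s=2\pi-r$ together with $f(t+r-2\pi)=f(t+r)$ and the elementary identity $\tfrac{e^{-2\pi\lambda}}{1-e^{-2\pi\lambda}}=\tfrac{1}{e^{2\pi\lambda}-1}$. Uniqueness also drops out of the reduction above: the difference $w$ of two periodic solutions satisfies $w'+\lambda w=0$, hence $w(t)=w(0)e^{-\lambda t}$, and periodicity forces $w(0)\bigl(1-e^{-2\pi\lambda}\bigr)=0$, so $w\equiv 0$.

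When $\lambda\in i\Z$ the identity above degenerates to $\int_0^{2\pi}e^{\lambda s}f(s)\,ds=0$, which is precisely the assumed compatibility condition; granting it, I would simply check that the function in \eqref{solutionzero} works. Differentiating gives $u'(t)=-\lambda e^{-\lambda t}\int_0^t e^{\lambda s}f(s)\,ds+f(t)=-\lambda u(t)+f(t)$, and $2\pi$--periodicity is verified by writing $\int_0^{t+2\pi}=\int_0^{2\pi}+\int_{2\pi}^{t+2\pi}$, using the compatibility condition to annihilate the first piece and the change of variables $s\mapsto s+2\pi$ together with periodicity of $f$ to turn the second into $e^{2\pi\lambda}\int_0^{t}e^{\lambda s}f(s)\,ds$, which cancels the prefactor $e^{-2\pi\lambda}$.

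There is no serious obstacle here; the only point requiring a little care is the bookkeeping with the $2\pi$--shifts in the changes of variables, i.e.\ making sure that each of the formulas \eqref{solutionminus}, \eqref{solutionplus}, \eqref{solutionzero} genuinely defines a $2\pi$--periodic function rather than merely a function on $[0,2\pi]$, and that in the resonant case $\lambda\in i\Z$ the stated condition $\int_0^{2\pi}e^{\lambda s}f(s)\,ds=0$ is exactly what is needed for that periodicity.
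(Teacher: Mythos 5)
Your argument is correct in every step. It is, however, organized differently from the paper's own (very terse) justification: the paper simply observes that $E=(1-e^{-2\pi\lambda})^{-1}e^{-\lambda t}$ (extended $2\pi$-periodically) is the fundamental solution of $\frac{d}{dt}+\lambda$ on $\mathbb{T}^1$ when $\lambda\notin i\Z$, so that \eqref{solutionminus} is just $E*f$, and obtains \eqref{solutionplus} from \eqref{solutionminus} by the change of variable $s\mapsto 2\pi-r$. You instead run the integrating-factor computation from scratch, extract the periodicity constraint $\bigl(e^{2\pi\lambda}-1\bigr)u(0)=\int_0^{2\pi}e^{\lambda s}f(s)\,ds$, and read off everything from it. Your route is more elementary and buys two things the paper leaves implicit: a genuine proof of uniqueness in the non-resonant case (the paper asserts it without argument), and the observation that the compatibility condition $\int_0^{2\pi}e^{\lambda s}f(s)\,ds=0$ is \emph{necessary} for a periodic solution to exist when $\lambda\in i\Z$, not merely sufficient for \eqref{solutionzero} to be periodic. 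The paper's route is shorter and fits the distributional convolution framework used elsewhere in the article, but requires one to already know (or verify separately) the jump computation showing $E'+\lambda E=\delta$. One small remark: in the resonant case you could have noted that $e^{2\pi\lambda}=1$ outright, which collapses your periodicity bookkeeping for \eqref{solutionzero} to a single line; and the paper additionally points out that the solution \eqref{solutionzero} is not unique (one may add any multiple of $e^{-\lambda t}$), a fact your derivation also makes visible but which you did not state.
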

To prove this lemma observe that $E=(1-e^{-2\pi\lambda})^{-1}e^{\lambda t}$ is the fundamental solution of the operator $\frac{d}{dt}+\lambda$, when $\lambda \notin i\Z$. The equivalence between \eqref{solutionminus} and \eqref{solutionplus} follows from the change of variable $s \mapsto -r+2\pi$. We point out that the solution \eqref{solutionzero} is not unique.

By Lemma \ref{solutionconstant}, if $ma_0 \notin \Z$  the equation \eqref{equationconst} has a unique solution that can be written as
$$
v(t,\ell)_{mn} = \frac{1}{1-e^{-2\pi ima_0}} \int_0^{2\pi} e^{-ima_0 s} g(t-s,\ell)_{mn} \, ds
$$
and then
\begin{equation}\label{solution1}
\widehat{u}(t,\ell)_{mn} = \frac{1}{1-e^{-2\pi ima_0}} \int_0^{2\pi} e^{-imH(t,s)} \widehat{f}(t-s,\ell)_{mn} \, ds,
\end{equation}
where $H(t,s) := \int_{t-s}^{t} a(\theta) \, d\theta.$

If $ma_0 \in \Z$, a solution for the equation \eqref{equationconst} can be expressed as
$$
v(t,\ell)_{mn} = e^{-ima_0t}\int_0^t e^{ima_0 s} g(s,\ell)_{mn} \, ds
$$
and then
\begin{equation}\label{solution2}
\widehat{u}(t,\ell)_{mn} = e^{-imH(t,t)} \int_0^t e^{imH(s,s)} \widehat{f}(s,\ell)_{mn} \, ds. 
\end{equation}
Here, one can see a condition on $f$ for the existence of a solution $\widehat{u}(t,\ell)_{mn}$ on $\mathbb{T}^1$. If $m a_0 \in \Z$, then
\begin{equation}\label{compatibility}
\int_0^{2\pi} e^{imH(t,t)} \widehat{f}(t,\ell)_{mn} \, dt=0.
\end{equation}
This condition appears in Lemma \ref{solutionconstant} in order to guarantee that the solution is well-defined in $\mathbb{T}^1$.

So, if $f \in C^\infty(\TS)$ does not satisfy the condition above, the equation \eqref{EQ:LufFC} has no solution on $\mathbb{T}^1$. We will denote by $\mathcal{K}$ the set of smooth functions on $\TS$ that satisfy the condition \eqref{compatibility}.

\begin{defi}\label{defsolv}
	We say that the operator $L$ is globally $C^\infty$-solvable if for any $f \in \mathcal{K}$, there exists $u \in C^\infty(\TS)$ such that $Lu=f$.
\end{defi}

\begin{rem}
	In the literature it is common to define the global $C^\infty$-solvability for $f$ in the set
	$$
	\{h\in C^\infty(\TS); \jp{w,h}=0; \ \forall w \in \KER\, ^t\!L \},
	$$
	(e.g. \cite{Pet98}, \cite{Pet11}). For the operator $L$ that we are studying in this work, this set coincides with the set $\mathcal{K}$ defined previously.
\end{rem}
Let us investigate if the operator $L$ defined in \eqref{operator} is globally $C^\infty$-solvable. First, let us prove a technical result about derivatives of the exponential function.
\begin{lemma}\label{lemma51}
	For any $\alpha \in \N_0$, there exists $C_\alpha$ such that
	$$
	|\partial^\alpha_t e^{imH(t,t)}| \leq C_\alpha |m|^{\alpha},
	$$
	for all $t \in \mathbb{T}^1$.
\end{lemma}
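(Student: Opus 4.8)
The plan is to reduce the estimate to a one-variable computation. First I would record that $H(t,t)=\int_{0}^{t}a(\theta)\,d\theta$, so that $\partial_t^{k}\bigl(H(t,t)\bigr)=a^{(k-1)}(t)$ for every $k\ge 1$; since $a\in C^\infty(\mathbb{T}^1)$ and $\mathbb{T}^1$ is compact, each of these derivatives is a bounded function of $t$. Moreover $a$ is real-valued, hence $H(t,t)\in\R$ and $\bigl|e^{imH(t,t)}\bigr|=1$ for all $t$ and all $m$.

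Next I would prove by induction on $\alpha\ge 1$ that
\[
\partial_t^{\alpha}e^{imH(t,t)}=e^{imH(t,t)}\sum_{k=1}^{\alpha}c_{\alpha,k}(t)\,m^{k},
\]
where each $c_{\alpha,k}$ is a universal polynomial expression in $a,a',\dots,a^{(\alpha-1)}$, in particular a bounded function on $\mathbb{T}^1$, and --- this is the point that matters --- no term of $m$-degree $0$ appears. The base case is $\partial_t e^{imH(t,t)}=im\,a(t)\,e^{imH(t,t)}$. For the inductive step one differentiates once more, obtaining
\[
\partial_t^{\alpha+1}e^{imH(t,t)}=e^{imH(t,t)}\Bigl(im\,a(t)\sum_{k=1}^{\alpha}c_{\alpha,k}(t)\,m^{k}+\sum_{k=1}^{\alpha}c_{\alpha,k}'(t)\,m^{k}\Bigr);
\]
the first sum contributes powers $m^{k}$ with $2\le k\le\alpha+1$ and bounded coefficients (products of $a$ with the $c_{\alpha,k}$), the second contributes powers with $1\le k\le\alpha$, so together one recovers the asserted form with all exponents between $1$ and $\alpha+1$.

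Finally, taking absolute values and using $\bigl|e^{imH(t,t)}\bigr|=1$ gives, for $\alpha\ge 1$,
\[
\bigl|\partial_t^{\alpha}e^{imH(t,t)}\bigr|\le\sum_{k=1}^{\alpha}\|c_{\alpha,k}\|_{L^\infty(\mathbb{T}^1)}\,|m|^{k}.
\]
Here I would use that the index $m$ of $\textsf{t}^{\ell}$ satisfies $\ell-m\in\N_0$ with $\ell\in\frac12\N_0$, so that $2m\in\Z$; thus either $m=0$, in which case the left-hand side vanishes and the inequality is trivial, or $|m|\ge\frac12$, in which case $|m|^{k}=|m|^{\alpha}\,|m|^{k-\alpha}\le 2^{\alpha-k}|m|^{\alpha}\le 2^{\alpha}|m|^{\alpha}$ whenever $1\le k\le\alpha$. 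This yields the lemma with $C_\alpha:=2^{\alpha}\sum_{k=1}^{\alpha}\|c_{\alpha,k}\|_{L^\infty(\mathbb{T}^1)}$, the case $\alpha=0$ being simply $\bigl|e^{imH(t,t)}\bigr|=1$.

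I expect the only point requiring care to be exactly this last step: the claimed bound is false for a real parameter $m\to 0$ (already for $\alpha=2$, since $\partial_t^{2}e^{imH(t,t)}$ contains a term linear in $m$), so the argument must exploit the discreteness of $m$ together with the structural fact --- supplied by the induction --- that every monomial occurring in $\partial_t^{\alpha}e^{imH(t,t)}$ has strictly positive $m$-degree, which is what lets the lower powers $|m|^{k}$ be absorbed into $|m|^{\alpha}$ once $|m|$ is bounded away from $0$. Everything else is a routine application of the product rule.
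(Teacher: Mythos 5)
Your argument is correct and is essentially the paper's own proof: the paper invokes Fa\`a di Bruno's formula to write $\partial_t^{\alpha}e^{imH(t,t)}$ as a sum of terms $c(t)\,(im)^{|\gamma|}e^{imH(t,t)}$ with bounded coefficients and $1\le|\gamma|\le\alpha$, which is exactly the expansion your induction produces, and both proofs then conclude by bounding each $|m|^{k}$ by a multiple of $|m|^{\alpha}$. Your closing remark is well taken: the final inequality $|m|^{k}\le C\,|m|^{\alpha}$ for $k\le\alpha$ does require either $m=0$ (where the left side of the lemma vanishes) or $|m|\ge\tfrac12$, a point the paper's one-line ``$\le C_\alpha|m|^{\alpha}$'' passes over silently and which you handle explicitly and correctly using $2m\in\Z$.
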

\begin{proof}
	By Fa\`a di Bruno's Formula, we have
	$$
	\partial_t^\alpha e^{imH(t,t)} = \sum_{\gamma \in \Delta(\alpha)} \frac{\alpha! }{\gamma!} (im)^{|\gamma|} e^{imH(t,t)} \prod_{j=1}^{\alpha} \left(\frac{\partial_t^j H(t,t)}{j!}\right)^{\gamma_j},
	$$
	where $\Delta(\alpha) = \bigg\{\gamma \in \N_0^{\alpha}; \ \sum\limits_{j=1}^\alpha j\gamma_j= \alpha \bigg\}$.
	
	Notice that $H(t,t)=\int_0^t a(\theta) \, d\theta$, so $\partial^j_tH(t,t) = \partial_t^{j-1} a(t)$, for all $j\geq 1$. Hence
	$$
 	|\partial^\alpha_t e^{imH(t,t)}| \leq \sum_{\gamma \in \Delta(\alpha)} \frac{\alpha! }{\gamma!} |m|^{|\gamma|} \prod_{j=1}^{\gamma} \left(\frac{|\partial_t^j H(t,t)|}{j!}\right)^{\gamma_j} \leq C_\alpha |m|^\alpha.
 	$$
\end{proof}

Assume that $f \in C^\infty(\mathbb{T}^1\times \St)$. By Theorem \ref{caracsmooth}, for all $\beta \in \N_0$ and $\N >0$ there exists $C_{\beta N}>0$ such that
\begin{equation}\label{f}
|\partial_t^\beta \widehat{f}(t,\ell)_{mn}| \leq C_{\beta N}(1+\ell)^{-N},
\end{equation}
for all $t \in \mathbb{T}^1$, $\ell \in \frac{1}{2}\N_0$, and $-\ell \leq m,n \leq \ell$.

Let us determine when $u$ defined by the partial Fourier coefficients \eqref{solution1} and \eqref{solution2} belongs to $C^\infty(\mathbb{T}^1\times\St)$.

Let $\alpha \in \N_0$ and $N>0$. If $ma_0 \in \Z$, we have
\begin{equation*}\label{solution3}
\partial_t^\alpha \widehat{u}(t,\ell)_{mn} = \sum_{\beta \leq \alpha} \binom{\alpha}{\beta} \partial_t^{\alpha - \beta}\{e^{-imH(t,t)}\}  \partial_t^\beta\left\{ \int_0^t e^{imH(s,s)} \widehat{f}(s,\ell)_{mn} \, ds\right\}.
\end{equation*}

Notice that for $\beta \geq 1$ we have 
{
\small
\begin{align*}
 \partial_t^\beta\left\{ \int_0^t e^{imH(s,s)} \widehat{f}(s,\ell)_{mn} \, ds\right\} &= \partial_t^{\beta-1} \left\{ e^{imH(t,t)} \widehat{f}(t,\ell)_{mn} \right\}  \\ &= \sum_{\gamma \leq \beta-1} \binom{\beta-1}{\gamma} \partial^{(\beta-1)-\gamma}_t e^{imH(t,t)} \partial_t^\gamma \widehat{f}(t,\ell)_{mn}.
\end{align*}}
For each $\gamma$, by \eqref{f} there exists $C_{\gamma \alpha N}>0$ such that $$|\partial_t^\gamma \widehat{f}(t,\ell)_{mn}| \leq C_{\gamma \alpha N}(1+\ell)^{-(N+\alpha)}.$$
By Lemma \ref{lemma51}, since $|m| \leq \ell$,  we have
$$
| \partial^{(\beta-1)-\gamma}_t e^{imH(t,t)}| \leq C_\beta |m|^{(\beta-1)-\gamma} \leq C_\beta (1+\ell)^\beta.
$$
So, there exists $C_{\alpha\beta N}>0$ such that 
\begin{equation}\label{estimate2}
\left| \partial_t^\beta\left\{ \int_0^t e^{imH(s,s)} \widehat{f}(s,\ell)_{mn} \, ds\right\} \right| \leq C_{\alpha\beta N} (1+\ell)^{-(N+\alpha)+\beta}.
\end{equation}
Again by Lemma \ref{lemma51} we have
\begin{equation}\label{estimate1}
|\partial_t^{\alpha - \beta}\{e^{-imH(t,t)}\}| \leq C_{\alpha \beta}(1+\ell)^{\alpha-\beta}
\end{equation}
Therefore, from estimates \eqref{estimate2} and \eqref{estimate1}, we obtain $C_{\alpha N} >0$ such that
\begin{equation*}
|\partial_t^\alpha \widehat{u}(t,\ell)_{mn}| \leq C_{\alpha N} (1+\ell)^{-N}.
\end{equation*}

Let us assume now that $ma_0 \not \in \Z$. With a slight change in the proof of Lemma \ref{lemma51} we obtain the same estimate
$$
|\partial_t^\alpha e^{imH(t,s)}| \leq C_\alpha |m|^{\alpha},
$$
for all $t,s \in \mathbb{T}^1$.

By the same arguments of the previous case we obtain for all $\alpha \in \N_0$ and $N>0$
$$
|\partial_t^\alpha\widehat{u}(t,\ell)_{mn}| \leq |1-e^{-2\pi im a_0}|^{-1} C_{\alpha N}(1+\ell)^{-N}.
$$

Now, we need to estimate $|1-e^{-2\pi im a_0}|^{-1}$ in order to apply Theorem \ref{caracsmooth} to conclude that $u \in C^\infty(\mathbb{T}^1\times \St)$. 
In the case where $m a_0 \notin \Z$, there exist $C,M>0$ such that 
$$
|1-e^{-2\pi im a_0}| \geq C|m|^{-M},
$$
if and only if $a_0 \in \Q$ or $a_0$ is an irrational non-Liouville number (see \cite{BDGK15}, Lemma 3.4)

This way, for either $a_0 \in \Q$ or $a_0$ an irrational non-Liouville number, we obtain constants $C,M>0$ such that 

$$
|1-e^{-2\pi im a_0}|^{-1} \leq C|m|^{M} \leq C\ell^M,
$$
when $ma_0 \notin\Z$. So, adjusting the constants if necessary, for all $\alpha \in \N_0$ and $N>0$, there exists $C_\alpha N>0$ such that
$$
|\partial^\alpha_t \widehat{u}(t,\ell)_{mn}| \leq C_{\alpha N} (1+\ell)^{-N},
$$
for all $t \in \mathbb{T}^1$, $\ell \in \frac{1}{2}\N_0$, and $-\ell \leq m,n \leq \ell$. Therefore, by Theorem \ref{caracsmooth} we have $u \in C^\infty(\mathbb{T}^1\times \St)$ and by the uniqueness of Fourier coefficients, we conclude that $Lu=f$.


We have proved the following proposition:
\begin{prop}\label{sufficondition}
	The operator $L=\partial_t + a(t)X$ is globally $C^\infty$-solvable if $a_0 \in \Q$ or $a_0$ is an irrational non-Liouville number, where $a_0 = \frac{1}{2\pi}\int_0^{2\pi} a(s) \, ds$.
\end{prop}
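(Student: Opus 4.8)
The plan is to construct $u$ explicitly through its partial Fourier coefficients in the $\St$-variable and then invoke Theorem~\ref{caracsmooth} to check that these coefficients assemble into a smooth function. Given $f\in\mathcal{K}$, the equation $Lu=f$ is equivalent to the scalar family of first order ODEs \eqref{EQ:LufFC} on $\mathbb{T}^1$, and after the conjugation $v(t,\ell)_{mn}=e^{imA(t)}\widehat u(t,\ell)_{mn}$ these become the constant coefficient equations \eqref{equationconst}. For each triple $(\ell,m,n)$ I would apply Lemma~\ref{solutionconstant}: if $ma_0\notin\Z$ it gives the unique $2\pi$-periodic solution, which unwinds to the closed formula \eqref{solution1}; if $ma_0\in\Z$, the hypothesis $f\in\mathcal{K}$ is precisely the compatibility condition \eqref{compatibility}, so the solution \eqref{solution2} is well defined on $\mathbb{T}^1$. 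This produces a candidate $u$ whose partial Fourier coefficients $\widehat u(\cdot,\ell)_{mn}$ are smooth on $\mathbb{T}^1$ since $a$ and $\widehat f(\cdot,\ell)_{mn}$ are.

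The heart of the proof is the estimate of $\partial_t^\alpha\widehat u(t,\ell)_{mn}$. Fix $\alpha\in\N_0$ and $N>0$. In the resonant case $ma_0\in\Z$ I would differentiate \eqref{solution2} by the Leibniz rule and the fundamental theorem of calculus; each resulting derivative of $e^{\pm imH(t,t)}$ is controlled by Lemma~\ref{lemma51}, which costs only a factor $C|m|^k\le C(1+\ell)^k$ with $k\le\alpha$ since $|m|\le\ell$, while the derivatives of $\widehat f(\cdot,\ell)_{mn}$ decay rapidly in $\ell$ by \eqref{f} (valid because $f\in C^\infty(\TS)$, by Theorem~\ref{caracsmooth}). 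Choosing the decay exponent of $\widehat f$ large enough then absorbs the polynomial losses and yields $|\partial_t^\alpha\widehat u(t,\ell)_{mn}|\le C_{\alpha N}(1+\ell)^{-N}$. In the non-resonant case $ma_0\notin\Z$ the same computation applied to \eqref{solution1}, whose phase $e^{-imH(t,s)}$ obeys the analogous bound, gives the same estimate up to the extra factor $|1-e^{-2\pi ima_0}|^{-1}$.

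Controlling these small divisors is the main obstacle, and it is the only step where the arithmetic hypothesis on $a_0$ is used: that $a_0\in\Q$ or that $a_0$ is an irrational non-Liouville number is equivalent (Lemma~3.4 of \cite{BDGK15}) to the existence of $C,M>0$ with $|1-e^{-2\pi ima_0}|\ge C|m|^{-M}$ for all $m$ with $ma_0\notin\Z$. Hence $|1-e^{-2\pi ima_0}|^{-1}\le C|m|^M\le C\ell^M$ is again merely a polynomial factor in $\ell$, absorbed exactly as above, so that $|\partial_t^\alpha\widehat u(t,\ell)_{mn}|\le C_{\alpha N}(1+\ell)^{-N}$ holds uniformly in $t,\ell,m,n$ in all cases.

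Finally I would apply the sufficiency direction of Theorem~\ref{caracsmooth} with $G_1=\mathbb{T}^1$ and $G_2=\St$: the sequence $\{\widehat u(\cdot,\ell)_{mn}\}$ consists of smooth functions on $\mathbb{T}^1$ satisfying $|\partial_t^\alpha\widehat u(t,\ell)_{mn}|\le C_{\alpha N}(1+\ell)^{-N}$ for every $\alpha$ and $N$, which is precisely the required condition, so $u\in C^\infty(\TS)$. By construction each $\widehat u(t,\ell)_{mn}$ solves \eqref{EQ:LufFC}, hence by uniqueness of Fourier coefficients $Lu=f$. As $f\in\mathcal{K}$ was arbitrary, $L$ is globally $C^\infty$-solvable in the sense of Definition~\ref{defsolv}.
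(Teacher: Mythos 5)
Your proposal is correct and follows essentially the same route as the paper: construct $\widehat u(t,\ell)_{mn}$ via the explicit formulas \eqref{solution1} and \eqref{solution2}, bound $\partial_t^\alpha\widehat u$ using the Leibniz rule, Lemma \ref{lemma51} and the rapid decay \eqref{f}, control the small divisors via Lemma 3.4 of \cite{BDGK15}, and conclude with Theorem \ref{caracsmooth} and uniqueness of Fourier coefficients. No discrepancies worth noting.
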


\subsection{Normal form}
\mbox{}

Notice that the global solvability of the operator $L$ is strictly related with the number $a_0$. This is not a coincidence because we can conjugate the operator $L$ to the operator $L_{a_0} = \partial_t + a_0 X$. We say that the operator $L_{a_0}$ is the normal form of $L$. In this section we will construct this conjugation and show that the sufficient conditions on Proposition \ref{sufficondition} for the global $C^\infty$--solvability of $L$ are actually necessary conditions.

Define
$$\Psi_a u(t,x) := \sum_{\ell \in \frac{1}{2} \N_0} (2\ell +1) \sum_{m,n=1}^{2\ell+1} e^{imA(t)}\widehat{u}(t,\ell)_{mn}\textsf{t}^\ell(x)_{nm}.
$$
\begin{prop} $\Psi_a$ is an automorphism in $\mathcal{D}'(\mathbb{T}^1\times \St)$ and in $C^\infty(\mathbb{T}^1\times \St)$.
\end{prop}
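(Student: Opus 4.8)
The plan is to show that $\Psi_a$ is a well-defined continuous linear bijection on each of the two spaces $C^\infty(\TS)$ and $\mathcal{D}'(\TS)$, with continuous inverse, by exhibiting that inverse explicitly and by translating the definition of $\Psi_a$ into the language of partial Fourier coefficients so that Theorems~\ref{caracsmooth} and~\ref{caracdist} apply directly. Concretely, reading off the formula defining $\Psi_a$, the $\textsf{t}^\ell$-partial Fourier coefficient in the $x$-variable of $\Psi_a u$ is
$$
\widehat{\Psi_a u}(t,\ell)_{mn} = e^{imA(t)}\,\widehat{u}(t,\ell)_{mn},
$$
since $A(t)$ is independent of $x$ and the map $u\mapsto e^{imA(t)}\widehat u(t,\ell)_{mn}$ is exactly what appears inside the Fourier series. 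The natural candidate for the inverse is
$$
\Psi_a^{-1}v(t,x) := \sum_{\ell\in\frac12\N_0}(2\ell+1)\sum_{m,n} e^{-imA(t)}\widehat v(t,\ell)_{mn}\,\textsf{t}^\ell(x)_{nm},
$$
i.e.\ $\Psi_{-a}$ up to the obvious sign; one then checks $\Psi_a^{-1}\Psi_a = \Psi_a\Psi_a^{-1} = \mathrm{Id}$ at the level of partial Fourier coefficients, where it is immediate because $e^{-imA(t)}e^{imA(t)}=1$. So the only real content is continuity (equivalently, that $\Psi_a$ maps each space into itself).

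First I would treat the smooth case. Fix $u\in C^\infty(\TS)$. By Theorem~\ref{caracsmooth} applied with $G_1=\St$, $G_2=\Tum$ (or the version used in \eqref{f}), we have for all $\beta\in\N_0$ and all $N>0$ a constant $C_{\beta N}$ with $|\partial_t^\beta \widehat u(t,\ell)_{mn}|\leq C_{\beta N}(1+\ell)^{-N}$, uniformly in $t,\ell,m,n$. Now $\partial_t^\beta\bigl(e^{imA(t)}\widehat u(t,\ell)_{mn}\bigr)$ expands by the Leibniz rule into a sum of terms $\binom{\beta}{\gamma}\,\partial_t^{\beta-\gamma}(e^{imA(t)})\,\partial_t^\gamma\widehat u(t,\ell)_{mn}$. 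Since $A\in C^\infty(\Tum)$ and $H(t,s)$ in Lemma~\ref{lemma51} can be replaced by $A$ with no change in the argument, the same Fa\`a di Bruno estimate gives $|\partial_t^{\beta-\gamma}e^{imA(t)}|\leq C_{\beta}|m|^{\beta-\gamma}\leq C_\beta(1+\ell)^{\beta}$ because $|m|\leq\ell$. Absorbing the polynomial factor $(1+\ell)^\beta$ by choosing $N$ larger (replace $N$ by $N+\beta$), we get $|\partial_t^\beta\widehat{\Psi_a u}(t,\ell)_{mn}|\leq C_{\beta N}(1+\ell)^{-N}$ for every $\beta,N$, and continuity of the corresponding seminorms is read off from these estimates; Theorem~\ref{caracsmooth} then yields $\Psi_a u\in C^\infty(\TS)$. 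Applying the same to $\Psi_a^{-1}=\Psi_{-a}$ (note $A$ is replaced by $-A$, still smooth) shows $\Psi_a$ is a bijection of $C^\infty(\TS)$ with continuous inverse, hence an automorphism.

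For the distributional case I would use Theorem~\ref{caracdist}: $u\in\mathcal{D}'(\TS)$ iff there are $K\in\N$, $C>0$ with $|\langle\widehat u(\cdot,\textsf{t}^\ell)_{mn},\varphi\rangle|\leq C\,p_K(\varphi)(1+\ell)^K$ for all $\varphi\in C^\infty(\Tum)$. Here the $t$-variable partial coefficient of $\Psi_a u$ acts on a test function $\varphi(t)$ by $\langle \widehat{\Psi_a u}(\cdot,\textsf{t}^\ell)_{mn},\varphi\rangle = \langle \widehat u(\cdot,\textsf{t}^\ell)_{mn}, e^{imA(\cdot)}\varphi\rangle$, because multiplication by the smooth function $e^{imA(t)}$ is the transpose operation on $\mathcal{D}'(\Tum)$. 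Thus I must bound $p_K\bigl(e^{imA}\varphi\bigr)$ in terms of $p_K(\varphi)$ times a power of $(1+\ell)$: by Leibniz and Lemma~\ref{lemma51} (with $A$ in place of $H$), $p_K(e^{imA}\varphi)\leq C_K(1+|m|)^K p_K(\varphi)\leq C_K(1+\ell)^K p_K(\varphi)$. Plugging in gives $|\langle\widehat{\Psi_a u}(\cdot,\textsf{t}^\ell)_{mn},\varphi\rangle|\leq C C_K\,p_K(\varphi)(1+\ell)^{2K}$, which is the required estimate with exponent $2K$, so $\Psi_a u\in\mathcal{D}'(\TS)$ by Theorem~\ref{caracdist}; the same reasoning applied to $\Psi_{-a}$ gives the inverse. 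The main obstacle, such as it is, is purely bookkeeping: one has to be careful that the transpose of ``multiply the $x$-Fourier coefficient by $e^{imA(t)}$'' really is again an operator of the same shape acting on the $G_2=\Tum$ side (so that the hypotheses of Theorems~\ref{caracsmooth}--\ref{caracdist} apply with the roles of $G_1,G_2$ as needed), and that the polynomial-in-$m$ blow-up from differentiating $e^{imA(t)}$ is controlled by $|m|\leq\ell$ so that it can always be absorbed into the $(1+\ell)$-powers — exactly the mechanism already used in the proof of Proposition~\ref{sufficondition}.
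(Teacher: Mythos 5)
Your proof is correct and follows essentially the same route as the paper: reduce to showing $\Psi_a$ preserves each space (the inverse being $\Psi_{-a}$), read off $\widehat{\Psi_a u}(t,\ell)_{mn}=e^{imA(t)}\widehat{u}(t,\ell)_{mn}$, and control $\partial_t^\beta e^{imA(t)}$ via the Fa\`a di Bruno estimate of Lemma~\ref{lemma51} so that the factor $|m|^\beta\leq(1+\ell)^\beta$ is absorbed into the decay required by Theorem~\ref{caracsmooth}. The only quibble is the parenthetical ``$G_1=\St$, $G_2=\Tum$'', which has the roles reversed relative to how Theorems~\ref{caracsmooth}--\ref{caracdist} are stated (derivatives live on $G_1=\Tum$, decay is in $[\eta]\in\widehat{G_2}=\Sth$); otherwise your distributional argument actually spells out the duality step that the paper dismisses as ``analogous.''
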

\begin{proof}
It is easy to see that $\Psi_a$ is linear and has inverse $\Psi_{-a}$, therefore we only need to prove that $\Psi_a (C^\infty(\TS)) = C^{\infty}(\TS)$ and $\Psi_a (\mathcal{D}'(\TS)) = \mathcal{D}'(\TS)$.

Let $\beta \in \N_0$ and $u\in C^\infty(\TS)$. We will use  Theorem \ref{caracsmooth} to show that $\Psi_a u \in C^\infty(\TS)$. Notice that $\widehat{\Psi_a u}(t,\ell)_{mn} = e^{i m A(t)}\widehat{u}(t,\ell)_{mn}$ for all $\ell \in \frac{1}{2}\N_0$, $\ell \leq m,n \leq \ell $ and $t \in \mathbb{T}^1$. In this way
\begin{align*}
|\partial^\beta \widehat{\Psi_a u}(t,\ell)_{mn}| &= | \partial^\beta (e^{imA(t)}\widehat{u}(t,\ell)_{mn})| \nonumber \\
&= \left|\sum_{\gamma \leq \beta } \binom{\beta}{\gamma} \partial^{\beta-\gamma}e^{imA(t)}\partial^{\gamma}\widehat{u}(t,\eta)_{mn} \right|\nonumber \\
&\leq \sum_{\gamma \leq \beta } \binom{\beta}{\gamma} |\partial^{\beta-\gamma}e^{imA(t)}||\partial^{\gamma}\widehat{u}(t,\eta)_{mn}| \nonumber \\
&\leq \sum_{\gamma\leq \beta }C_{\beta} |m|^\beta  \left|\partial^{\beta} \widehat{u}(t,\ell)_{mn}\right| \nonumber,
\end{align*}
where the last inequality comes from an adaptation of Lemma \ref{lemma51}. Since $u \in C^\infty(\TS)$ and $|m| \leq \ell$, again by Theorem \ref{caracsmooth}, it is easy to see that given $N>0$, there exists $C_{\beta N}$ such that
$$
|\partial^\beta \widehat{\Psi_a u}(t,\ell)_{mn}| \leq C_{\beta N} (1+\ell)^{-N}.
$$
Therefore $\Psi_a u \in C^\infty(\TS)$. The distribution case is analogous.

\end{proof}

\begin{prop}\label{normal}
	Let
	$
	L_{a_0}=\partial_{t}+a_0 X.
	$
	Then
$$\Psi_a\circ L = L_{a_0}\circ \Psi_a.$$

\end{prop}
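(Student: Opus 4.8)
The plan is to verify the identity $\Psi_a\circ L = L_{a_0}\circ\Psi_a$ by comparing the partial Fourier coefficients of both sides applied to an arbitrary $u$, since two distributions on $\TS$ agree precisely when all their partial Fourier coefficients $\widehat{(\cdot)}(t,\ell)_{mn}$ agree (by the uniqueness of Fourier coefficients used repeatedly in the paper). So I would first record the two elementary facts I need: taking the $\textsf{t}^\ell$-partial Fourier transform in $x$ commutes with $\partial_t$, and converts the action of $X$ into multiplication by $im$ on the $mn$-component, i.e. $\widehat{Xu}(t,\ell)_{mn} = im\,\widehat{u}(t,\ell)_{mn}$, which is exactly formula \eqref{equation}. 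I also need $\widehat{\Psi_a u}(t,\ell)_{mn} = e^{imA(t)}\widehat{u}(t,\ell)_{mn}$, which was already established in the proof of the previous proposition.

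Next I would compute $\widehat{L_{a_0}\Psi_a u}(t,\ell)_{mn}$. Using $L_{a_0} = \partial_t + a_0 X$ and the two facts above,
$$
\widehat{L_{a_0}\Psi_a u}(t,\ell)_{mn} = \partial_t\big(e^{imA(t)}\widehat{u}(t,\ell)_{mn}\big) + i m a_0\, e^{imA(t)}\widehat{u}(t,\ell)_{mn}.
$$
Expanding the derivative and using $A'(t) = a(t) - a_0$ (immediate from the definition \eqref{primitiva}), the term $imA'(t)e^{imA(t)}\widehat u$ combines with $ima_0 e^{imA(t)}\widehat u$ to give $ima(t)e^{imA(t)}\widehat u(t,\ell)_{mn}$, so
$$
\widehat{L_{a_0}\Psi_a u}(t,\ell)_{mn} = e^{imA(t)}\big(\partial_t\widehat u(t,\ell)_{mn} + ima(t)\widehat u(t,\ell)_{mn}\big).
$$
On the other side, $\widehat{\Psi_a L u}(t,\ell)_{mn} = e^{imA(t)}\widehat{Lu}(t,\ell)_{mn}$, and by \eqref{equation} the bracket $\widehat{Lu}(t,\ell)_{mn} = \partial_t\widehat u(t,\ell)_{mn} + ima(t)\widehat u(t,\ell)_{mn}$ is precisely what appears above. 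Hence the $mn$-components of $\widehat{\Psi_a L u}$ and $\widehat{L_{a_0}\Psi_a u}$ coincide for every $\ell$, $-\ell\le m,n\le\ell$, and $t$, which forces $\Psi_a L u = L_{a_0}\Psi_a u$; since $u$ was arbitrary, $\Psi_a\circ L = L_{a_0}\circ\Psi_a$.

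There is no serious obstacle here — the proof is essentially the one-line computation $\partial_t e^{imA(t)} = im(a(t)-a_0)e^{imA(t)}$ packaged correctly. The only points requiring a little care are bookkeeping ones: making sure the conjugation really acts as claimed on Fourier coefficients (so that $\Psi_a$ can be pushed inside the partial Fourier transform, which is legitimate because $\Psi_a$ is defined coefficient-wise), and checking that $\Psi_a L$ and $L_{a_0}\Psi_a$ are both well-defined continuous operators on $C^\infty(\TS)$ and $\mathcal D'(\TS)$ so that the equality of all partial Fourier coefficients implies equality of the operators — but this is guaranteed by the previous proposition showing $\Psi_a$ is an automorphism of both spaces together with the fact that $L$ and $L_{a_0}$ are differential operators.
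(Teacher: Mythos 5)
Your proof is correct and follows essentially the same route as the paper: both verify the identity on the level of partial Fourier coefficients, using $\widehat{\Psi_a u}(t,\ell)_{mn}=e^{imA(t)}\widehat{u}(t,\ell)_{mn}$, the symbol formula $\widehat{Xu}(t,\ell)_{mn}=im\,\widehat{u}(t,\ell)_{mn}$, and the computation $\partial_t e^{imA(t)}=im(a(t)-a_0)e^{imA(t)}$, then conclude by uniqueness of Fourier coefficients. No gaps.
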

\begin{proof}
 We will show that for every $u \in C^\infty(\mathbb{T}^1\times \St)$ we have $$\widehat{\Psi_a (Lu)}(t,\ell)_{mn}=\widehat{L_{a_0}(\Psi_a u)}(t,\ell)_{mn},$$ for all $t \in \mathbb{T}^1$, $\ell \in \frac{1}{2}\N_0$ and $-\ell \leq m,n \leq \ell$.
Indeed, we have
\begin{align*}
	\widehat{L_{a_0}(\Psi_a u)}(t,\ell)_{mn} & =  \partial_t \widehat{\Psi_a u}(t,\ell)_{mn}+ia_0m\widehat{\Psi_a u}(t,\ell)_{mn} \\
	&= \partial_t \left\{e^{imA(t)} \widehat{u}(t,\ell)_{mn} \right\} + ia_0 me^{imA(t)}\widehat{u}(t,\ell)_{mn} \\
	&= im(a(t)-a_0)e^{imA(t)} \widehat{u}(t,\ell)_{mn} + e^{imA(t)} \partial_t\widehat{u}(t,\ell)_{mn}+ia_0me^{imA(t)} \widehat{u}(t,\ell)_{mn} \\
	&=e^{imA(t)} (\partial_t \widehat{u}(t,\ell)_{mn}+ia(t)m\widehat{u}(t,\ell)_{mn})\\
	&= e^{imA(t)} \widehat{Lu}(t,\ell)_{mn}\\
	&=\widehat{\Psi_a(Lu)}(t,\ell)_{mn}.
\end{align*}
Therefore $\Psi_a\circ L = L_{a_0}\circ \Psi_a$. The same holds when  it is considered $u \in \mathcal{D}'(\mathbb{T}^1 \times \St)$.
\end{proof}

Let $a_0$ be an irrational Liouville number. The Fourier coefficients of $L_{a_0}u$, for $u\in \mathcal{D}'(\mathbb{T}^1\times \St)$ are expressed by
\begin{equation}\label{solution}
\doublehat{L_{a_0}u}(\tau,\ell)_{mn} = i(\tau + a_0m)\doublehat{u}(\tau,\ell)_{mn},
\end{equation}
with $\tau \in \Z$, $\ell \in \frac{1}{2}\N_0$, $-\ell \leq m,n \leq \ell$.

Since $a_0$ is an irrational Liouville number, for every $N\in \N$, there exists $\tau_M \in \Z$ and $\ell_M \in \N$ such that
\begin{equation}\label{liouville}
0 < |\tau_M+a_0\ell_M| \leq (|\tau_M|+|\ell_M|)^{-M}.
\end{equation}
Define
$$
\doublehat{\,f\,}(\tau,\ell)_{mn} =  \left\{
\begin{array}{ll}
\tau_M+a_0\ell_M, & \mbox{if $(\tau,\ell)=(\tau_M,\ell_M)$ for some $M\in \N$ and $m=\ell_M$} , \\
0, & \mbox{otherwise.}
\end{array}
\right.
$$

It is easy to see that \eqref{liouville} and Theorem \ref{smoo} imply that $f \in \mathcal{K}_{a_0}$ (see Definition \ref{defsolv}). If $L_{a_0}u=f$ for some $u\in C^\infty(\mathbb{T}^1\times \St)$, the expression \eqref{solution} gives us 
$$
\left|\doublehat{\, u\, }(\tau_M,\ell_M)_{\ell_M\ell_M}\right|=1,
$$
which contradicts the fact that $u$ is a smooth function. Therefore $L_{a_0}$ is not globally $C^\infty$--solvable when $a_0$ is an irrational Liouville number. Notice that $u \in \mathcal{D}'(\mathbb{T}^1\times \St)$ (see Theorem \ref{dist}). Since $f \in \mathcal{K}_{a_0}$, when $ma_0\in \Z$ we obtain
\begin{align*}
0 = \int_0^{2\pi} e^{ima_0t}\widehat{f}(t,\ell)_{mn} \, dt &= \int_0^{2\pi}e^{imH(t,t)}e^{-imA(t)}\widehat{f}(t,\ell)_{mn} \, dt \\
&= \int_0^{2\pi}e^{imH(t,t)}\widehat{\Psi_{-a}f}(t,\ell)_{mn} \, dt.
\end{align*}
So $\Psi_{-a}f \in \mathcal{K}$. Assume that there exists some $u \in C^{\infty}(\mathbb{T}^1\times \St)$ such that $Lu=\Psi_{-a}f$. By Proposition \ref{normal} we obtain
$$
f = \Psi_a Lu = L_{a_0} \Psi_a u.
$$
By what was discussed previously and the fact that $\Psi_a$ is an automorphism of $C^\infty(\mathbb{T}^1\times \St)$ and $\mathcal{D}'(\mathbb{T}^1\times \St)$, we conclude that $u\in \mathcal{D}'(\mathbb{T}^1\times \St)\setminus C^\infty(\mathbb{T}^1\times \St)$. We have proved the following proposition:
\begin{prop}
	The operator $L=\partial_t + a(t)X$ is globally $C^\infty$-solvable if and only if $a_0 \in \Q$ or $a_0$ is an irrational non-Liouville number, where $a_0 = \frac{1}{2\pi} \int_0^{2\pi} a(s) \, ds$.
\end{prop}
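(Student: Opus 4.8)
The statement is an \emph{if and only if}, and the \emph{if} direction has already been established: it is precisely Proposition \ref{sufficondition}. So the plan is to concentrate on the converse, i.e.\ to show that if $a_0$ is an irrational Liouville number, then $L$ fails to be globally $C^\infty$-solvable.

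The first step is to pass to the normal form. By Proposition \ref{normal}, $\Psi_a\circ L = L_{a_0}\circ\Psi_a$ with $L_{a_0}=\partial_t+a_0X$, and $\Psi_a$ is an automorphism of both $C^\infty(\TS)$ and $\mathcal{D}'(\TS)$ with inverse $\Psi_{-a}$. I would then check that $\Psi_{\pm a}$ carries the compatibility set $\mathcal{K}$ (resp.\ $\mathcal{K}_{a_0}$) into $\mathcal{K}_{a_0}$ (resp.\ $\mathcal{K}$); this reduces to the identity $H(t,t)=A(t)+a_0t$, which turns $\int_0^{2\pi}e^{ima_0t}\widehat{f}(t,\ell)_{mn}\,dt$ into $\int_0^{2\pi}e^{imH(t,t)}\widehat{\Psi_{-a}f}(t,\ell)_{mn}\,dt$. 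Together with the fact that $\Psi_{\pm a}$ preserves smoothness, global $C^\infty$-solvability of $L$ becomes equivalent to that of $L_{a_0}$, so it suffices to disprove the latter.

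The second step builds the counterexample for $L_{a_0}$. Taking the full Fourier transform on $\TS$ turns $L_{a_0}u=f$ into the symbol equation $i(\tau+a_0m)\doublehat{\,u\,}(\tau,\ell)_{mn}=\doublehat{\,f\,}(\tau,\ell)_{mn}$ with $\tau\in\Z$. Since $a_0$ is Liouville, for each $M\in\N$ I choose $\tau_M\in\Z$ and $\ell_M\in\N$ with $0<|\tau_M+a_0\ell_M|\le(|\tau_M|+|\ell_M|)^{-M}$, and define $f$ by setting $\doublehat{\,f\,}(\tau_M,\ell_M)_{mn}:=\tau_M+a_0\ell_M$ for the entries with $m=\ell_M$ and all remaining Fourier coefficients equal to zero. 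The Liouville inequality together with Theorem \ref{smoo} gives $f\in C^\infty(\TS)$, and since every nonzero coefficient sits at $m=\ell_M$ with $\ell_Ma_0\notin\Z$, the compatibility condition \eqref{compatibility} holds vacuously, so $f\in\mathcal{K}_{a_0}$. If $u\in C^\infty(\TS)$ solved $L_{a_0}u=f$, the symbol equation would force $|\doublehat{\,u\,}(\tau_M,\ell_M)_{\ell_M\ell_M}|=1$ for every $M$, contradicting the rapid decay of the Fourier coefficients of a smooth function (Theorem \ref{smoo}); note $u\in\mathcal{D}'(\TS)$ by Theorem \ref{dist}, so this is a genuine solvability obstruction. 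Pulling this back through $\Psi_a$ — if $Lu=\Psi_{-a}f$ had a smooth solution, then $f=\Psi_aLu=L_{a_0}\Psi_au$ with $\Psi_au\in C^\infty(\TS)$ — contradicts the previous sentence and completes the argument.

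The only real work here is bookkeeping rather than analysis: verifying that the $f$ assembled from the Liouville data is smooth and lies in the compatibility space, and confirming that the conjugation $\Psi_{\pm a}$ respects the compatibility conditions. Once the normal-form reduction of the first step is in place, no further estimates are needed, since the heavy lifting (solving the scalar ODEs and controlling $|1-e^{-2\pi ima_0}|^{-1}$) was already carried out in proving Proposition \ref{sufficondition}.
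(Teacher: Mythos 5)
Your proposal is correct and follows essentially the same route as the paper: reduction to the normal form $L_{a_0}$ via the conjugation $\Psi_a$, construction of the same Liouville-based counterexample $f$ with $\doublehat{\,f\,}(\tau_M,\ell_M)_{mn}=\tau_M+a_0\ell_M$ at $m=\ell_M$, the contradiction $|\doublehat{\,u\,}(\tau_M,\ell_M)_{\ell_M\ell_M}|=1$ against smoothness, and the pullback through $\Psi_{-a}$ using $H(t,t)=A(t)+a_0t$ to land in $\mathcal{K}$. No substantive differences from the paper's argument.
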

\section*{Acknowledgments}

This study was financed in part by the Coordenação de Aperfeiçoamento de Pessoal de Nível Superior - Brasil (CAPES) - Finance Code 001. The last
author was also supported by the FWO Odysseus grant, by the Leverhulme Grant RPG-2017-151, and by EPSRC Grant EP/R003025/1.
\bibliographystyle{abbrv}
\bibliography{biblio}

\end{document}